\newtheorem{theorem}{Theorem}[section]
\newtheorem{lemma}[theorem]{Lemma}
\begin{document}
\textwidth 150mm \textheight 225mm
\title{5-regular oriented graphs with optimum skew energy
\thanks{ Supported by
the National Natural Science Foundation of China (No.11171273)}}
\author{{Lifeng Guo, Ligong Wang, Peng Xiao}\\
{\small Department of Applied Mathematics, School of Science, Northwestern
Polytechnical University,}\\
{\small  Xi'an, Shaanxi 710072, People's Republic of China.}\\
{\small E-mail: lfguomath@163.com, lgwangmath@163.com, xiaopeng@sust.edu.cn}\\
}

\date{}
\maketitle
\begin{center}
\begin{minipage}{120mm}
\vskip 0.3cm
\begin{center}
{\small {\bf Abstract}}
\end{center}
{\small Let $G$ be a simple undirected graph and $G^\sigma$ be the
corresponding oriented graph of $G$ with the orientation $\sigma$.
The skew energy of $G^\sigma$, denoted by $\varepsilon_s(G^\sigma)$,
is defined as the sum of the singular values of the skew adjacency
matrix $S(G^\sigma)$. In 2010, Adiga et al. certified that
$\varepsilon_s(G^\sigma) \leq n\sqrt{\Delta}$, where $\Delta$ is the maximum degree of
$G$ of order $n$. In this paper, we determine all connected 5-regular oriented
graphs of order $n$ with maximum skew-energy.

\vskip 0.1in \noindent {\bf Key Words}: \ Oriented graph, Skew-adjacency matrix,
Skew energy. \vskip
0.1in \noindent {\bf AMS Subject Classification Numbers}: \ 05C50, 15A18, 05C20.}
\end{minipage}
\end{center}

\section{Introduction} \label{intro-sec}

Let $G=(V(G),E(G))$ be a simple and undirected graph with vertex set
$V=V(G)=\{v_1,v_2,\ldots ,v_n\}$ and edge set $E(G)=\{e_1,e_2,\ldots
,e_m \}$. For any $v\in V$, we denote by $d(v)=d_G(v)$ and $N(v)=N_G(v)$
respectively the degree and the neighborhood of $v$ in $G$.
Denote by $\Delta=\Delta(G)$ the maximum degree of $G$
and $G[W]$ the subgraph of $G$ induced by the subset $W\subseteq
V(G)$. Let $G^\sigma$ be an oriented graph of $G$ with the
orientation $\sigma$, which assigns to each edge of $G$ a direction
so that the resulting graph $G^\sigma$ becomes an oriented graph.
Denote by $(u, v)$ the arc of $G^\sigma$, with tail $u$ and head $v$.
We call $G$ the underlying graph of $G^\sigma$. Here we take $V(G^\sigma)=V(G)$,
$N_{G^\sigma}(v)=N_G(v)=N(v)$.

  Let $A(G)$ be the $(0,1)$-adjacency matrix of $G$.
The skew-adjacency matrix of $G^\sigma$
is the $n\times n$ matrix $S(G^\sigma)=[s_{ij}]$, $s_{ij}=1$
whenever $(v_i, v_j)\in E(G^\sigma)$, $s_{ij}=-1$ whenever $(v_j,
v_i)\in E(G^\sigma)$, $s_{ij}=0$ otherwise. Denote by
$\varepsilon_s(G^\sigma)$ the skew energy of $G^\sigma$, which is
defined as the sum of the singular values of $S(G^\sigma)$. Since
$S(G^\sigma)$ is a skew-symmetric matrix, the sum of
the singular values of $S(G^\sigma)$ is the sum of the absolute
values of its eigenvalues.

  In \cite{IG1}, Gutman introduced the energy of a simple undirected graph.
Since several results on the energy of the adjacent matrix of a graph
have been obtained \cite{IG1,IG2,LSI}.
Recently more concepts of graph energy are studied, such as Laplacian energy,
signless Laplacian energy, incidence energy, distance energy for an undirected graph, the skew
energy and skew Laplacian energy for an oriented graph, see \cite{IG2} and \cite{LSI}.
The recent achievements about skew energy are collected in \cite{LL}.

  In \cite{ABW}, Adiga et al. first introduced the skew energy of an oriented graph.
In particular, a sharp upper bound for the skew energy of an
oriented graph $G^\sigma$ was obtained in terms of order $n$ and
maximum degree $\Delta$ of $G$, that is
\begin{equation*}
\label{inque1}
\varepsilon_s(G^\sigma)\leq n\sqrt\Delta.
\end{equation*}

They pointed out that $\varepsilon_s(G^\sigma)= n\sqrt\Delta$ if and
only if $S(G^\sigma)^{T}S(G^\sigma)=\Delta I_n$, where $I_n$ is the
identity matrix of order $n$. In the following, we call an oriented
graph $G^\sigma$ on  $n$ vertices with maximum degree $\Delta$ an
\emph{optimum skew energy oriented graph} if and only if
$\varepsilon_s(G^\sigma)= n\sqrt\Delta$. At the same time the
orientation $\sigma$ is called the \emph{optimum orientation}.

  Which $k$-regular graphs $G$ on $n$ vertices have orientations $\sigma$
satisfying $\varepsilon_s(G^\sigma)= n\sqrt k$?

  In \cite{ABW}, Adiga et al. solved the problem for situations of $k=1$ and $k=2$.
A 1-regular graph on $n$
vertices has skew energy is $n$ if and only if $n$ is
even and the graph is $n/2$ copies of $K_2$, A 2-regular graph on
$n$ vertices has skew energy $n\sqrt{2}$ if and only if $n$ is
a multiple of 4 and it is a union of $n/4$ copies of $C_4$.

  Then, Tian determined the optimum orientations for hypercubes \cite{TG}.
In \cite{GX}, Gong and Xu
characterized all connected 3-regular oriented graphs with optimum skew
energy, they are the complete graph $K_4$ and the hypercube
$\mathbf{Q}_3$. In \cite{GXZ}, Gong et al. considered the connected 4-regular
oriented graphs with optimum skew energy, they found the possible connected
4-regular underlying graphs, which are
$U_3,U_n(n\geq5),Q_4,P_2\square K_4$ and the graph $G_1$ in Fig 2.2
of \cite{GXZ}. In \cite{CLL}, Chen et al. concentrated on the connected
4-regular oriented graphs with optimum skew energy, they found the
same five underlying graphs in another way.


  There is a connection between the above question and matrix theory.
A weighing matrix $W=W(n;k)$ (see \cite{CW}) is defined as a square $(0,\pm1)$ matrix with $k$
non-zero entries per row and column and inner product of distinct rows equal to zero.
Therefore, we know $WW^T=kI_n$. In fact the skew adjacency matrix of a optimum skew energy
oriented graph is a skew symmetric weighing matrix.

  In this paper, we continue to investigate the above question and determine all connected
5-regular oriented graphs of order $n$ with maximum skew energy.

\section{The possible underlying graphs of connected 5-regular oriented
graphs with optimum skew energy} \label{Underly-sec}

  If each pair of neighborhoods of a graph have even number of common vertices,
then this graph is called to have even neighborhood property.
As we shall see, every 5-regular oriented graph with optimum skew energy
has even neighborhood property. In this section, we characterize all connected
5-regular graphs of order $n$ with this property.

\begin{lemma}(Theorem 2.5 and Corollary 2.6 of \cite{ABW}) \label{le:c-1} Let
$G^\sigma$ be an oriented graph of $G$ on n vertices with maximum
degree $\Delta$. Then $\varepsilon_s(G^\sigma)\leqslant
n\sqrt\Delta$ with equality if and only if
$S(G^\sigma)^{T}S(G^\sigma)=\Delta I_n$, where $I_n$ is the identity
matrix of order $n$.
\end{lemma}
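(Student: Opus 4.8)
\emph{Proof proposal.} The plan is to obtain the bound from a single application of the Cauchy--Schwarz inequality to the singular values of $S(G^\sigma)$, after re-expressing the relevant quadratic quantity combinatorially. Write $\sigma_1,\dots,\sigma_n$ for the singular values of $S(G^\sigma)$ (equivalently, since $S(G^\sigma)$ is skew-symmetric, the absolute values of its purely imaginary eigenvalues, listed with multiplicity), so that $\varepsilon_s(G^\sigma)=\sum_{k=1}^{n}\sigma_k$.

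First I would compute the Frobenius norm of $S(G^\sigma)$ in two ways. On the one hand $\operatorname{tr}\bigl(S(G^\sigma)^{T}S(G^\sigma)\bigr)=\sum_{k=1}^{n}\sigma_k^{2}$, since the $\sigma_k^{2}$ are exactly the eigenvalues of $S(G^\sigma)^{T}S(G^\sigma)$; on the other hand, reading off the diagonal of $S(G^\sigma)^{T}S(G^\sigma)$ and using that each edge of $G$ contributes exactly one nonzero entry $\pm1$ to each of its two incident rows, one gets
\[
\operatorname{tr}\bigl(S(G^\sigma)^{T}S(G^\sigma)\bigr)=\sum_{i=1}^{n}\sum_{j=1}^{n}s_{ij}^{2}=\sum_{i=1}^{n}d_G(v_i)\le n\Delta .
\]
Then Cauchy--Schwarz (applied to the vectors $(\sigma_1,\dots,\sigma_n)$ and $(1,\dots,1)$) gives
\[
\varepsilon_s(G^\sigma)=\sum_{k=1}^{n}\sigma_k\le\Bigl(n\sum_{k=1}^{n}\sigma_k^{2}\Bigr)^{1/2}\le\bigl(n\cdot n\Delta\bigr)^{1/2}=n\sqrt{\Delta},
\]
which is the asserted inequality.

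For the equality characterization I would simply examine when the two inequalities above are tight. Equality in Cauchy--Schwarz forces $\sigma_1=\dots=\sigma_n=:s$, and equality in $\sum_i d_G(v_i)\le n\Delta$ forces $G$ to be $\Delta$-regular; combining these with the trace identity gives $ns^{2}=n\Delta$, hence $s=\sqrt{\Delta}$. Therefore every eigenvalue of the symmetric matrix $S(G^\sigma)^{T}S(G^\sigma)$ equals $\Delta$, i.e.\ $S(G^\sigma)^{T}S(G^\sigma)=\Delta I_n$. Conversely, if $S(G^\sigma)^{T}S(G^\sigma)=\Delta I_n$ then all $n$ singular values of $S(G^\sigma)$ equal $\sqrt{\Delta}$, so $\varepsilon_s(G^\sigma)=n\sqrt{\Delta}$ at once.

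I do not anticipate a genuine obstacle here; the argument is elementary. The one place calling for a little care is the bookkeeping of multiplicities: the eigenvalues of the skew-symmetric matrix $S(G^\sigma)$ occur in conjugate pairs $\pm i\mu$, with an extra zero when $n$ is odd, so the Cauchy--Schwarz step must be phrased in terms of all $n$ singular values (each $\mu$ taken twice) to stay consistent with $\operatorname{tr}\bigl(S(G^\sigma)^{T}S(G^\sigma)\bigr)=\sum_i d_G(v_i)$.
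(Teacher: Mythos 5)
Your proof is correct: the trace identity $\operatorname{tr}\bigl(S(G^\sigma)^{T}S(G^\sigma)\bigr)=\sum_i d_G(v_i)\le n\Delta$ combined with Cauchy--Schwarz gives the bound, and the equality analysis (all singular values equal, hence $S(G^\sigma)^{T}S(G^\sigma)=\Delta I_n$ by symmetry, plus the trivial converse) is sound. The paper itself states this lemma without proof, citing Theorem 2.5 and Corollary 2.6 of \cite{ABW}, and your argument is essentially the standard one given in that source, so there is nothing substantive to compare beyond noting the agreement.
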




\begin{lemma} (Lemma 2.7 of \cite{ABW}) \label{le:c-4} Let
$S(G^\sigma)$ be the skew adjacency matrix of $G^\sigma$, if
$S(G^\sigma)^{T}S(G^\sigma)$\\$=kI_n$,
then $|N(u)\cap N(v)|$ is even for any distinct vertices $u$ and $v$
of $G^\sigma$.  In particular, if $G^\sigma$ is a
5-regular oriented graph of order $n$ satisfying $S(G^\sigma)^{T}S(G^\sigma)$ $=5I_n$, then
$|N(u)\cap N(v)|\in\{0,2,4\}$ for any two distinct vertices $u$ and $v$ of $G^\sigma$.
\end{lemma}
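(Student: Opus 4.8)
The plan is to translate the hypothesis $S(G^\sigma)^{T}S(G^\sigma)=kI_n$ into a statement about the square of $S(G^\sigma)$ and then extract a parity condition from a single off-diagonal entry. Write $S=S(G^\sigma)=[s_{ij}]$. Since $G^\sigma$ is an oriented graph, $S$ is skew-symmetric, i.e. $S^{T}=-S$, so the hypothesis becomes $-S^{2}=kI_n$, that is $S^{2}=-kI_n$.

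Next I would compute the $(u,v)$-entry of $S^{2}$ for two distinct vertices $u\neq v$. By definition $(S^{2})_{uv}=\sum_{w}s_{uw}s_{wv}$, and the summand $s_{uw}s_{wv}$ is nonzero precisely when $w$ is adjacent to both $u$ and $v$, i.e. when $w\in N(u)\cap N(v)$; in that case $s_{uw}s_{wv}\in\{+1,-1\}$. Hence $(S^{2})_{uv}$ is a sum of exactly $|N(u)\cap N(v)|$ terms, each equal to $\pm 1$. On the other hand, since $u\neq v$, the $(u,v)$-entry of $-kI_n$ is $0$, so this signed sum equals $0$.

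A sum of $t$ terms each equal to $\pm1$ is congruent to $t$ modulo $2$; since the sum is $0$, the integer $t=|N(u)\cap N(v)|$ must be even, which proves the first assertion. For the particular case, if $G^\sigma$ is $5$-regular then $|N(u)\cap N(v)|\le \min\{d(u),d(v)\}=5$, and being even it must lie in $\{0,2,4\}$.

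There is essentially no serious obstacle here; the only point that requires care is the initial reduction $S^{T}S=-S^{2}$, which is exactly where the orientation (skew-symmetry) enters, and the same computation would fail for an ordinary $(0,1)$ adjacency matrix. Everything else is the elementary parity observation about sums of $\pm1$'s together with the trivial degree bound.
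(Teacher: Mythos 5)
Your proof is correct, and the paper itself gives no proof of this lemma (it is quoted from Lemma 2.7 of the Adiga--Balakrishnan--So reference), whose argument is essentially the same entrywise computation: the off-diagonal $(u,v)$-entry of $S^{T}S=kI_n$ (equivalently, of $-S^{2}$) is a sum of $|N(u)\cap N(v)|$ terms equal to $\pm1$ that vanishes, forcing evenness, and the $5$-regularity bound then gives $\{0,2,4\}$. No issues.
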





\begin{lemma} \label{le:c-5}
   Let $G_1$ and $G_2$ be vertex disjoint graphs. Then $G_1$ and $G_2$ have
optimum orientations if and only if the union of $G_1$ and $G_2$ does.
\end{lemma}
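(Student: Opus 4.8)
The plan is to work directly with skew-adjacency matrices and block structure. Suppose $G_1$ and $G_2$ are vertex-disjoint graphs on $n_1$ and $n_2$ vertices respectively, and let $G = G_1 \cup G_2$ be their disjoint union on $n = n_1 + n_2$ vertices. Any orientation $\sigma$ of $G$ restricts to orientations $\sigma_1$ of $G_1$ and $\sigma_2$ of $G_2$, and conversely any pair $(\sigma_1, \sigma_2)$ assembles to an orientation $\sigma$ of $G$. Since there are no edges between $V(G_1)$ and $V(G_2)$, after ordering the vertices so that those of $G_1$ come first, the skew-adjacency matrix has the block-diagonal form
\begin{equation*}
S(G^\sigma) = \begin{pmatrix} S(G_1^{\sigma_1}) & 0 \\ 0 & S(G_2^{\sigma_2}) \end{pmatrix}.
\end{equation*}
Consequently $S(G^\sigma)^T S(G^\sigma)$ is also block-diagonal, with diagonal blocks $S(G_1^{\sigma_1})^T S(G_1^{\sigma_1})$ and $S(G_2^{\sigma_2})^T S(G_2^{\sigma_2})$.

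Next I would bring in the characterization from Lemma~\ref{le:c-1}. Observe that $\Delta(G) = \max\{\Delta(G_1), \Delta(G_2)\}$. For the forward direction, assume $G_1$ and $G_2$ both have optimum orientations; the subtlety here is that an optimum orientation of $G_i$ gives $S(G_i^{\sigma_i})^T S(G_i^{\sigma_i}) = \Delta(G_i) I_{n_i}$, and if $\Delta(G_1) \neq \Delta(G_2)$ the two blocks would not combine to a single scalar multiple of $I_n$. So the forward direction as literally stated requires $\Delta(G_1) = \Delta(G_2) = \Delta(G)$ — I would either note that the lemma is applied in the paper only to regular graphs of the same degree (so this holds automatically), or state the clean hypothesis that both components have the same maximum degree as $G$. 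Under that hypothesis, if $\sigma_1$ and $\sigma_2$ are optimum, then both diagonal blocks equal $\Delta I$, so $S(G^\sigma)^T S(G^\sigma) = \Delta I_n$, and by Lemma~\ref{le:c-1} the orientation $\sigma$ is optimum for $G$.

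For the converse, suppose $G = G_1 \cup G_2$ has an optimum orientation $\sigma$, so $S(G^\sigma)^T S(G^\sigma) = \Delta I_n$ with $\Delta = \Delta(G)$. Reading off the diagonal blocks, $S(G_i^{\sigma_i})^T S(G_i^{\sigma_i}) = \Delta I_{n_i}$ for $i = 1, 2$. This forces $\Delta(G_i) = \Delta$ (the diagonal entries of $S(G_i^{\sigma_i})^T S(G_i^{\sigma_i})$ are exactly the degrees $d_{G_i}(v)$, and they all equal $\Delta$, hence $G_i$ is $\Delta$-regular), and then Lemma~\ref{le:c-1} applied to each $G_i$ shows $\sigma_i$ is an optimum orientation. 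The only real obstacle is the bookkeeping around maximum degrees — matching $\Delta(G)$ against $\Delta(G_1)$ and $\Delta(G_2)$ — and in the regular setting of this paper that obstacle disappears, since all graphs under consideration are $5$-regular. An obvious induction then extends the statement from two components to any finite disjoint union.
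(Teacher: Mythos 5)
Your argument is correct, and in fact it is more careful than the paper, which states this lemma as a bare observation with no proof at all; the block-diagonal computation you give (order the vertices so that $S(G^\sigma)$ splits into the two blocks $S(G_1^{\sigma_1})$ and $S(G_2^{\sigma_2})$, then apply the characterization of Lemma \ref{le:c-1} blockwise, using that the diagonal entries of $S^TS$ are the vertex degrees) is exactly the argument the authors implicitly rely on. Your caveat about maximum degrees is also a genuine and valid correction, not mere bookkeeping: as literally stated the ``only if'' direction of the lemma is false when $\Delta(G_1)\neq\Delta(G_2)$, e.g.\ $K_2$ and $C_4$ each admit an optimum orientation, but their union has maximum degree $2$ and is not $2$-regular, so by Lemma \ref{le:c-1} no orientation of the union can attain $n\sqrt{2}$. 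Your repair --- either assuming $\Delta(G_1)=\Delta(G_2)=\Delta(G)$ or noting that the paper only ever invokes the lemma for components that are $5$-regular, where the hypothesis holds automatically and the converse direction even forces each component to be $\Delta$-regular --- is the right way to state it, and the remark that the two-component case extends to finitely many components by induction is fine though not needed for the paper's use.
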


  The following observation allows one to restrict attention to connected graphs.
For any 5-regular oriented graph $G^\sigma$ of order $n$ with optimum
skew energy $n\sqrt5$,
we will discuss the following three cases for the underlying
graph $G$ of 5-regular oriented graphs $G^\sigma$. That is,
\begin{itemize}
\item  $G$ contains $K_4$ as an induced subgraph.
\item  $G$ contains a $K_3$ but not $K_4$ as an induced subgraph.
\item  $G$ contains no $K_3$.
\end{itemize}

  In order to determine the connected 5-regular graphs with the even neighborhood property, we first introduce firstly a
procedure table.  The first column of the table denotes the vertices $v_i$ $(i=1,2,\ldots,n)$
of the graph, and the other columns of the table are the neighbors of
the vertex $v_i$ for every row.
From the table, we will compare the neighborhoods of any two vertices in the
graph and determine the neighbors of each vertex in the
graph. Finally, we will find the underlying connected 5-regular graphs $G$ with the even
neighborhood property of optimum skew energy oriented graphs $G^\sigma$.

  Next we introduce an algorithm how to obtain the procedure table which will be used in
Theorems 2.4, 2.5 and 2.6 respectively.

{\bf Step 1.} Choose a vertex $v_1$ of graph $G$ and its neighbors $v_2,v_3,v_4,v_5,v_6$,
then show them in the table.

{\bf Step 2.} Preliminarily determine the neighbors of $v_2,v_3,v_4,v_5,v_6$ according
to whether the graph contains $K_4$ or $K_3$.

{\bf Step 3.} Compare the neighborhoods of any two vertices $v_i$ and $v_j$ in the table.
If $|N(v_i)\cap N(v_j)|$ is odd, then find new neighbor(s) for $v_i$ and $v_j$ from the table
such that $|N(v_i)\cap N(v_j)|$ is even under the conditions that the graph is 5-regular with
even neighborhood property and whether the graph contains $K_4$ or $K_3$.
If such new neighbor(s) do not exist for all pairs of vertices $v_i$ and $v_j$ in the table,
then go to Step 4.

{\bf Step 4.} If the degree of every vertex in the table is 5, stop; else choose a vertex $v$
with degree less than 5 and that is not adjacent to any vertex of the table.
Then add a new vertex $u$ to the table such that $u$ is adjacent to $v$, and return to Step 3.

  In the following, every step will be shown by an area enveloped in dotted line.
We will explain how to use it in the progress of obtaining $G_1$ and $G_2$ of
Theorem \ref{th:c-1}.

The graphs in Figure \ref{Fig-1} are the connected 5-regular graphs with the even neighborhood property that contains $K_4$ as an induced graph.







\begin{theorem} \label{th:c-1} Let $G^\sigma$ be a connected 5-regular
oriented graph on $n$ vertices with optimum skew energy $n\sqrt 5$.
If the underlying graph $G$ contains $K_4$ as an induced subgraph,
then $G$ must be one of the following graphs with the even neighborhood
property depicted in Figure \ref{Fig-1}.
\end{theorem}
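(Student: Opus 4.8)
The plan is to execute the procedure-table algorithm described above. By Lemma~\ref{le:c-1}, optimum skew energy forces $S(G^\sigma)^{T}S(G^\sigma)=5I_n$, so Lemma~\ref{le:c-4} applies and $G$ has the even neighborhood property: $|N(u)\cap N(v)|\in\{0,2,4\}$ for all distinct $u,v\in V(G)$. Throughout one uses only this combinatorial consequence together with $5$-regularity and connectedness.

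First I would fix an induced $K_4$ with vertex set $\{v_1,v_2,v_3,v_4\}$ and, since $G$ is $5$-regular, record the two remaining neighbors of $v_1$ as $v_5,v_6$. Because $\{v_1,v_2,v_3,v_4\}$ induces $K_4$, each pair inside it already has the other two vertices as common neighbors, so $|N(v_i)\cap N(v_j)|\in\{2,4\}$ for $1\le i<j\le 4$; this is where the even-neighborhood property first bites. The case $|N(v_1)\cap N(v_2)|=4$ means $v_1$ and $v_2$ share both of their extra neighbors, i.e.\ $v_2$ is adjacent to $v_5,v_6$ as well; the case $|N(v_1)\cap N(v_2)|=2$ means $v_2$'s two extra neighbors are new vertices. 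Branching on these alternatives for the relevant pairs, and recording the forced adjacencies in the table, drives the whole argument; this is the content of Steps~1 and~2.

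Then, in each surviving branch I would iterate Steps~3 and~4: whenever two listed vertices have an odd number of common neighbors among those currently recorded, one must either locate additional common neighbors among existing table vertices or introduce new ones, always respecting $5$-regularity and the $K_4$ (or mere-triangle) structure; when a table vertex still has degree below $5$ and no admissible neighbor inside the table, a new vertex is appended and attached, and the comparison step is rerun. Each branch either terminates with a completed connected $5$-regular graph, which I then identify with one of the graphs of Figure~\ref{Fig-1} up to isomorphism, or reaches a contradiction — typically a pair whose common neighborhood is odd and cannot be corrected without exceeding degree $5$ or violating the even-neighborhood property. The process is finite because the number of ways to extend at each stage is bounded and $5$-regularity caps the total size.

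The main obstacle is exhaustiveness and termination of the case tree: one must be careful that every odd-intersection pair is resolved in all possible ways, that no legitimate extension is missed when a new vertex is introduced, and that isomorphic partial configurations are not explored twice. Controlling this combinatorial branching — and, in the terminal branches, correctly matching the completed $5$-regular graph to a member of Figure~\ref{Fig-1} — is the real work; the individual steps are only routine degree-and-intersection checks.
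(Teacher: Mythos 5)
Your overall strategy coincides with the paper's: invoke Lemma \ref{le:c-1} and Lemma \ref{le:c-4} to obtain the even neighborhood property, fix the induced $K_4$ on $\{v_1,v_2,v_3,v_4\}$ together with the two remaining neighbors $v_5,v_6$ of $v_1$, observe that evenness of $|N(v_1)\cap N(v_i)|$ forces each of $v_2,v_3,v_4$ to be adjacent to both or to neither of $v_5,v_6$, and then run the procedure-table search. The paper does exactly this, reducing by symmetry to the four patterns $(2,2,2)$, $(2,2,0)$, $(2,0,0)$, $(0,0,0)$ and then working through a long tree of subcases that produces $G_1,\dots,G_{14}$.

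There are, however, two genuine gaps. First, you never actually carry out the enumeration: the content of Theorem \ref{th:c-1} is precisely the list of graphs in Figure \ref{Fig-1}, and asserting that each branch ``either terminates with a graph which I then identify with one of the graphs of Figure \ref{Fig-1}, or reaches a contradiction'' presupposes the conclusion instead of deriving it. In the paper, that case analysis (Cases 1--4 and their subcases) \emph{is} the proof; your plan reproduces only the algorithmic preamble that precedes it. Second, your termination argument fails: you claim the process is finite because ``5-regularity caps the total size,'' but it does not. Several of the outcomes are infinite families of connected 5-regular graphs of unbounded order (for instance $G_3$ with $n\ge 12$, $G_6$ with $n\ge 8$, and $G_{12}$, $G_{13}$), so in those branches the table never closes at any bounded size. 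What is needed instead is to recognize that the forced adjacencies repeat periodically (in the paper, the configuration around $v_{16},v_{17},v_{18}$ is the same as the one around $v_{10},v_{11},v_{12}$, and similarly in Cases 3.2.2, 4.2.2, 4.3) and to argue that such a branch generates exactly the corresponding parametrized family in Figure \ref{Fig-1}. That step --- detecting and justifying the periodic repetition, rather than appealing to a nonexistent size bound --- is missing from your proposal.
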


\begin{proof}
  Since $G$ is a connected 5-regular graph which contains $K_4$ as an induced
subgraph, let $V(G[K_4])$ $=\{v_1,v_2,$ $v_3,v_4\}$ and the other neighbors are $v_5,v_6$.
Clearly any two vertices of $\{v_1,v_2,$ $v_3,v_4\}$ are adjacent.
This corresponds to Step 1 and Step 2 of Table 1. From
Lemma \ref{le:c-4}, we know $|N(v_1)\cap N(v_2)|$ is even. Thus, if $v_2v_5\in
E(G)$, we deduce $v_2v_6\in E(G)$. Similarly, if $v_3v_5\in
E(G)$ then $v_3v_6\in E(G)$, if $v_4v_5\in E(G)$ then $v_4v_6\in
E(G)$. Suppose $a=|\{v_5,v_6\}\cap N(v_2)|$, $b=|\{v_5,v_6\}\cap
N(v_3)|$, $c=|\{v_5,v_6\}\cap N(v_4)|$. Therefore the possible value
for $(a,b,c)$ is one of $\{(2,2,2), (2,2,0), (2,0,2), (0,2,2),$ $(2,0,$ $0), (0,2,0),$ $(0,0,2),(0,0,0)\}$. If we do not consider the
labeling order of $v_2,v_3,v_4$,
We only need to discuss four cases $(2,2,2), (2,2,0),(2,0,0), (0,0,0)$.
\begin{figure}[htbp]
\begin{centering}
\includegraphics[scale=0.8]{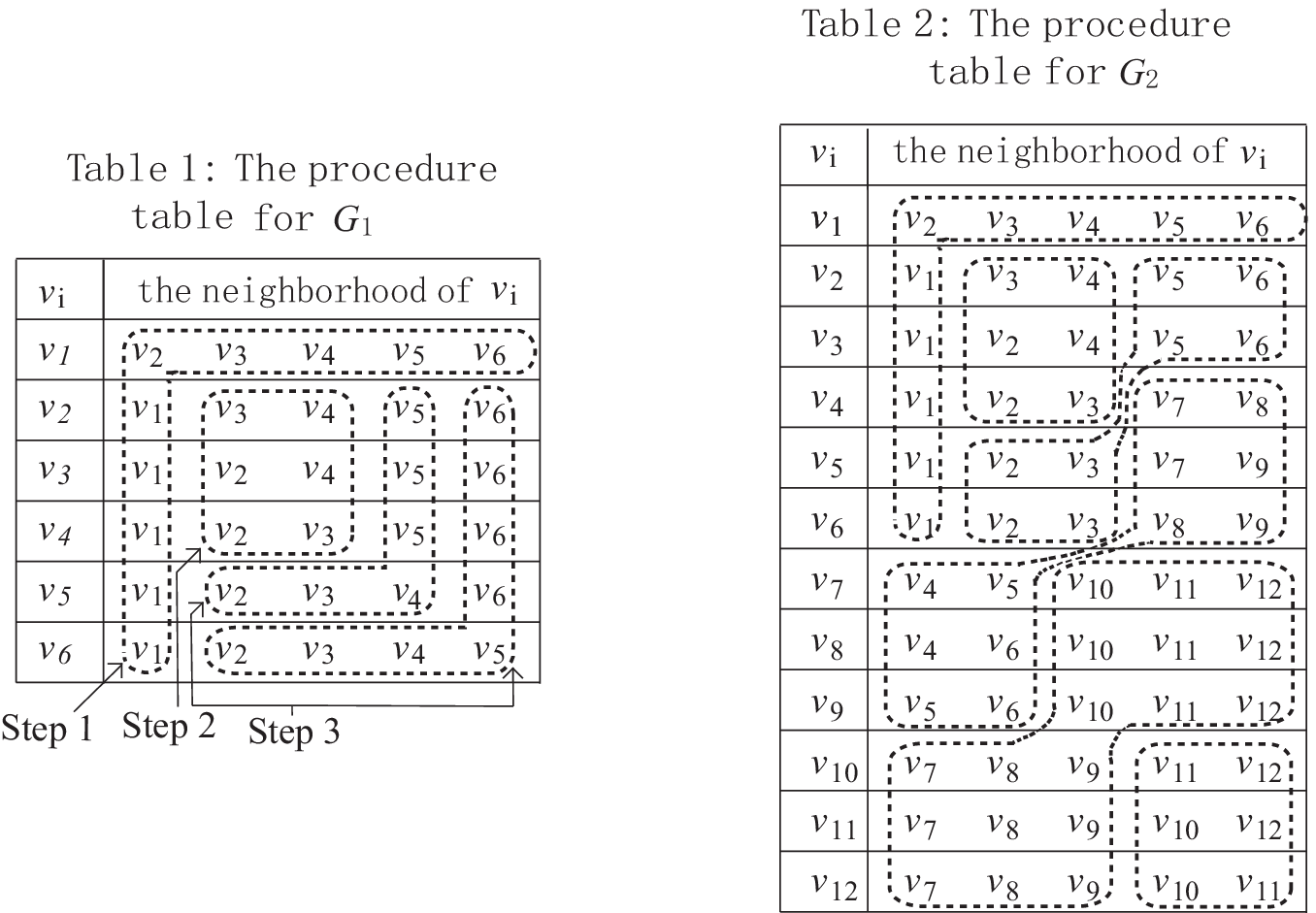}
\end{centering}
\end{figure}
\begin{figure}[htbp]
\begin{centering}
\includegraphics[scale=0.6]{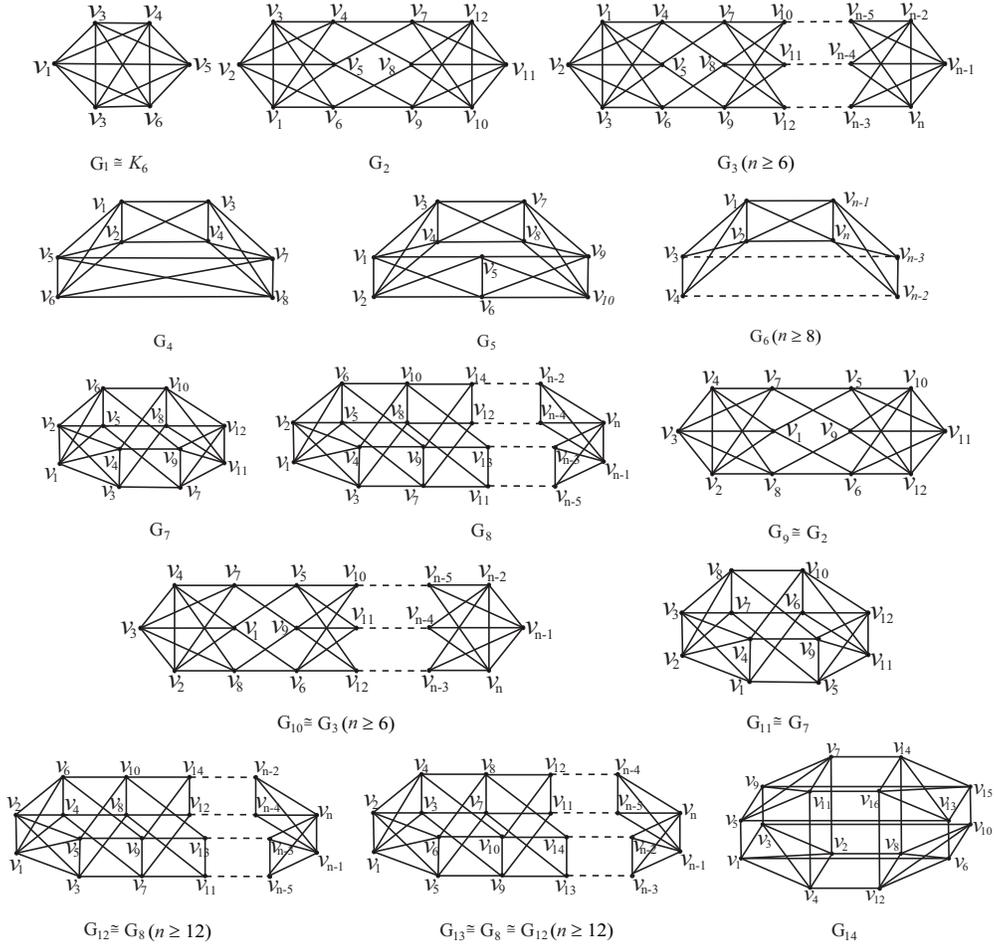}\\
\caption{All the possible underlying graphs for Theorem 2.4}\label{Fig-1}
\end{centering}
\end{figure}

\textit{Case 1. $(a,b,c)=(2,2,2).$}

  In this case, we know that $N(v_5)\supseteq\{v_1,v_2,v_3,v_4\}$ and
$N(v_1)=\{v_2,v_3,v_4,v_5,v_6\}$.
Applying Lemma \ref{le:c-4} to $|N(v_1)\cap N(v_5)|$, we have $v_6\in
N(v_5)$, Thus $N(v_5)=\{v_1,v_2,v_3,$ $v_4,v_6\}$ and $d(v_i)=5$, for
$i=1,2,3,4,5,6$. This corresponds to Step 3 of Table 1.

  Now, the degree of every vertex is 5 and the graph is connected,
so the graph $G_1$ is found.

   The resulting graph is $G_1\cong K_6$ with the even neighborhood property
depicted in Figure \ref{Fig-1}. We have shown the progress
of obtaining $G_1$ in Table 1.

\textit{Case 2. $(a,b,c)=(2,2,0)$.}

  In this case, $N(v_2)=\{v_1,v_3,v_4,v_5,v_6\}$,
$N(v_3)=\{v_1,v_2,v_4,v_5,v_6\}$,
$v_4v_5\notin E(G)$ and $v_4v_6\notin E(G)$.

  There must be two new vertices $v_7,v_8\in N(v_4)$.
Applying Lemma \ref{le:c-4} to $|N(v_7)$ $\cap N(v_1)|$,
either $v_7v_5\in E(G)$ or $v_7v_6\in E(G)$, and not both.
Without loss of generality,
let $v_7v_5\in E(G)$ and $v_7v_6\notin E(G)$. Then $N(v_6)\cap
N(v_4)\supseteq\{v_1,v_2,v_3\}$, so $v_6v_8\in E(G)$ from Lemma \ref{le:c-4}.
We deduce that $v_5v_6\notin E(G)$, otherwise $|N(v_1)\cap N(v_5)|=3$ and
$d(v_1)=d(v_5)=5$, which contradict to Lemma \ref{le:c-4}.
Since $d(v_6)=5$, there is another new neighbor $v_9$ of $v_6$.
Applying Lemma \ref{le:c-4} to $|N(v_5)\cap N(v_6)|$,
either $v_5v_8\in E(G)$ or $v_5v_9\in E(G)$. If $v_5v_8\in E(G)$,
then $|N(v_4)\cap N(v_5)|=5$, which contradicts to Lemma \ref{le:c-4}.
Thus $v_5v_8\notin E(G)$ and $v_5v_9\in E(G)$. So far, we have
$d(v_i)=5$ for $i=1,2,3,4,5,6$, and $d(v_j)=2$ for $j=7,8,9$. We
deduce that $v_7v_8\notin E(G), v_8v_9\notin E(G),v_7v_9\notin
E(G)$. Otherwise, without loss of generality, we assume $v_7v_8\in E(G)$, then
$|N(v_4)\cap N(v_7)|=1$. While $N(v_4)=\{v_1,v_2,v_3,v_7,v_8\},
d(v_1)=d(v_2)=d(v_3)=5$ and the graph is a simple graph, so there is a
contradiction. Similarly, $v_8v_9\notin E(G),v_7v_9\notin E(G)$.

  Now $v_7$ has three new neighbors, denoted by $v_{10},v_{11},v_{12}$.
From $|N(v_4)\cap N(v_{10})|$, we know $v_8v_{10}\in E(G)$.
Similarly we consider $|N(v_4)\cap N(v_{11})|,
|N(v_4)\cap N(v_{12})|, |N(v_5)\cap N(v_{10})|, |N(v_5)\cap
N(v_{11})|$ and $|N(v_5)\cap N(v_{12})|$, respectively. We deduce
$v_8v_{11}\in E(G),$\\
$v_8v_{12}\in E(G), v_9v_{10}\in E(G),
v_9v_{11}\in E(G)$ and $v_9v_{12}\in E(G)$, respectively.

  Next, we will discuss the following two cases.

\textit{Case 2.1.} There is a pair of adjacent vertices in
$\{v_{10},v_{11},v_{12}\}$.

  Without loss of generality, let $v_{10}v_{11}\in E(G)$, then
$N(v_7)=\{v_4,v_5,v_{10},v_{11},v_{12}\}$,
$N(v_7)\cap N(v_{10})\supseteq\{v_{11}\}$ and $d(v_4)=d(v_5)=5$. By Lemma \ref{le:c-4},
$v_{10}v_{12}\in E(G)$, similarly $v_{11}v_{12}\in E(G)$.
Since $d(v_i)=5$ for $i=1,2,\ldots,12$, the graph is $G_2$
with the even neighborhood property as shown in Figure \ref{Fig-1}.

  From Case 2 to here, every paragraph above corresponds a progress shown by an area
enveloped in dotted line in Table 2. In the following, all progress of proving Theorems
\ref{th:c-1}, \ref{th:c-2} and \ref{th:c-3} can be
shown in some similar procedure tables.

\textit{Case 2.2.} No vertices among $\{v_{10},v_{11} ,v_{12}\}$ are adjacent.

  In this case, there must be two new neighbors
$v_{13},v_{14}$ of $v_{10}$. By Lemma \ref{le:c-4}, exactly one of
$v_{11}v_{13}$ and $v_{12}v_{13}$ is an edge of $G$. Without loss of generality,
assume $v_{11}v_{13}$ is an edge of $G$.
By Lemma \ref{le:c-4}, $v_{12}v_{14}$ is an edge and $v_{11}v_{14}$ is not.
Thus there exists a new vertex $v_{15}$
that is adjacent to $v_{11}$, and by Lemma \ref{le:c-4} we have $v_{12}v_{15}$ is an edge.
Now, $d(v_{10})=d(v_{11})=d(v_{12})=5, d(v_{13})=d(v_{14})=d(v_{15})=2$.

   We deduce $v_{13}v_{14}\notin E(G), v_{13}v_{15}\notin E(G)$ and
$v_{14}v_{15}\notin E(G)$. Otherwise, without loss of generality, we assume
$v_{13}v_{14}\in E(G)$. Then $N(v_{10})=\{v_7,v_8,v_9,v_{13},v_{14}\}$,
$N(v_{10})\cap N(v_{13})=\{v_{14}\}$, and $d(v_7)=d(v_8)=d(v_9)=5$.
This is a contradiction to Lemma \ref{le:c-4}. Thus
there exist three new neighbors $v_{16},v_{17},v_{18}$ of
$v_{13}$. Since $|N(v_{13})\cap N(v_{14})|$ is even,
without of loss generality, we may assume $v_{14}v_{16}\in E(G)$. Similarly,
$v_{15}v_{16}\in E(G)$ (since $|N(v_{11})\cap N(v_{16})|$),
$v_{14}v_{17}\in E(G)$ ($|N(v_{10})\cap N(v_{17})|$),
$v_{15}v_{17}\in E(G)$ ($|N(v_{11})\cap N(v_{17})|$),
$v_{14}v_{18}\in E(G)$ ($|N(v_{10})\cap N(v_{18})|$),
$v_{15}v_{18}\in E(G)$ ($|N(v_{11})\cap N(v_{18})|$).

  Now we need to know the adjacency relationship of $v_{16},v_{17},v_{18}$,
which is similar to the adjacency relationship of $v_{10},v_{11},v_{12}$.
By further discussion, we obtain a class of graph
$G_3$ with order $n\geq 12$ and the even neighborhood property as shown in Figure \ref{Fig-1}.

\textit{Case 3.} $(a,b,c)=(2,0,0)$.

   By using a similar method to that used in Cases 1 and 2, we find underlying
graphs $G_4,G_5,G_6,$ $G_7,G_8$, Details are as follows.

   In this case $v_3v_5\notin E(G), v_3v_6\notin E(G),
v_4v_5\notin E(G), v_4v_6\notin E(G)$. Let the new neighbors of
$v_3$ be $v_7,v_8$. Applying Lemma \ref{le:c-4} to $|N(v_5)\cap N(v_2)|$,
we obtain $v_5v_6\in E(G)$. Next we discuss the following three cases.

\textit{Case 3.1.} $v_4v_7\in E(G)$ and $v_5v_7\in E(G)$.

  We deduce $v_4v_8\in E(G)$ ($|N(v_3)\cap N(v_4)|$).
Similarly $v_7v_8\in E(G)$ ($|N(v_4)\cap N(v_7)|$),
$v_5v_8\in E(G)$ ($|N(v_4)\cap N(v_5)|$),
$v_6v_7\in E(G)$ ($|N(v_1)\cap N(v_7)|$),
$v_6v_8\in E(G)$ ($|N(v_1)\cap N(v_8)|$).
Now $d(v_i)=5$, for $i=1,2,\ldots,8$. The graph is $G_4$ with the even neighborhood
property as shown in Figure \ref{Fig-1}.

\textit{Case 3.2.} $v_4v_7\in E(G)$ and $v_5v_7\notin E(G)$.

We know $v_4v_8\in E(G)$ ($|N(v_3)\cap N(v_4)|)$,
$v_7v_8\in E(G)$ ($|N(v_4)\cap N(v_7)|$),
$v_5v_8\notin E(G)$ from $v_5v_7\notin E(G)$ and $|N(v_4)\cap N(v_5)|$.
There are two new neighbors $v_9,v_{10}$ of $v_5$ and $v_6v_9\in E(G)$
($|N(v_1)\cap N(v_9)|$), $v_6v_{10}\in E(G)$ ($|N(v_1)\cap N(v_{10})|$).

\textit{Case 3.2.1.} $\{v_7v_9,v_7v_{10},v_8v_9,v_8v_{10}\}\cap
E(G)\neq\emptyset$.

Without loss of generality, we assume that $v_7v_9\in E(G)$.
Then $v_7v_{10}\in E(G)$ ($|N(v_6)\cap N(v_7)|$),
$v_8v_9\in E(G)$ ($|N(v_3)\cap N(v_9)|$),
$v_8v_{10}\in E(G)$ ($|N(v_6)\cap N(v_8)|$),
$v_9v_{10}\in E(G)$ ($|N(v_5)\cap N(v_9)|$).
The graph $G_5$ is found as shown in Figure \ref{Fig-1}.

\textit{Case 3.2.2.} $\{v_7v_9,v_7v_{10},v_8v_9,v_8v_{10}\}\cap
E(G)=\emptyset$.

  There are two new neighbors $v_{11}, v_{12}$ of $v_7$.
Then $v_8v_{11}\in E(G)$ ($|N(v_3)\cap N(v_{11})|$),
$v_8v_{12}\in E(G)$ ($|N(v_3)\cap N(v_{12})|$),
$v_{11}v_{12}\in E(G)$ ($|N(v_7)\cap N(v_{11})|$),
$v_9v_{10}\in E(G)$ ($|N(v_5)\cap N(v_9)|$).

 Next we should discuss
whether $\{v_9v_{11},v_9v_{12},v_{10}v_{11},$ $v_{10}v_{12}\}\cap
E(G)=\emptyset$ or not. This discussion is similar to the discussion
that whether $\{v_7v_9,v_7v_{10},$ $v_8v_9,v_8v_{10}\}\cap
E(G)=\emptyset$ or not. With further discussion, we obtain a class
of graph which is isomorphic to $G_6$ with order $n\geq 8$ in Figure \ref{Fig-1}.

  \textit{Case 3.3.} $v_4v_7\notin E(G)$.

  We deduce $v_4v_8\notin E(G)$ ($|N(v_3)\cap N(v_4)|$).
Then there exist two new neighbors $v_9,v_{10}$ of $v_4$.
Now $|N(v_7)\cap N(v_1)|$, so either $v_5v_7\in E(G)$ or
$v_6v_7\in E(G)$, and not both. While $N(v_5)\supseteq\{v_1,v_2,v_6\}$
and $N(v_6)\supseteq\{v_1,v_2,v_5\}$, without loss of generality, we
assume $v_5v_7\in E(G)$ and $v_6v_7\notin E(G)$.
Then $v_5v_8\in E(G)$ ($|N(v_3)\cap N(v_5)|$).
$v_6v_7\notin E(G)$ and $|N(v_5)\cap N(v_6)|$ imply $v_6v_8\notin E(G)$.
$|N(v_4)\cap N(v_7)|$ implies either $v_7v_9\in E(G)$ or $v_7v_{10}\in E(G)$, and not both.
Without loss of generality, we assume $v_7v_9\in E(G)$ and $v_7v_{10}\notin E(G)$.

  Similarly, we know either $v_8v_9\in E(G)$ or $v_8v_{10}\in E(G)$. But if $v_8v_9\in E(G)$,
$N(v_3)\cap N(v_9)=\{v_4,v_7,v_8\}$ and for the other neighbors
$v_1,v_2$ of $v_3$, $d(v_1)=d(v_2)=5$.
So $v_8v_9\notin E(G)$ and $v_8v_{10}\in E(G)$.

  Now $v_6v_9\in E(G)$ ($|N(v_1)\cap N(v_9)|$), similarly $v_6v_{10}\in E(G)$.
We deduce $v_7v_8\notin E(G)$ ($|N(v_5)\cap N(v_7)|$), coupled with
$v_7v_{10}\notin E(G)$, so there are two new neighbors $v_{11},v_{12}$ of $v_7$.
Then $v_8v_{11}\in E(G)$ ($|N(v_3)\cap N(v_{11})|$), similarly $v_8v_{12}\in E(G)$.
Next we consider whether
$\{v_9v_{11},v_9v_{12},v_{10}v_{11},v_{10}v_{12}\}\cap E(G)=\emptyset$ or not.

  Similar to Cases 3.2.1 and 3.2.2, we obtain the graphs $G_7$ and $G_8$
depicted in Figure \ref{Fig-1}.

\textit{Case 4.} $(a,b,c)=(0,0,0)$.

  Similarly, we find the underlying graphs $G_9,G_{10},G_{11},$ $G_{12},G_{13},G_{14}$
with the even neighbors property. Details are as follows

  In this situation we have $v_2v_5\notin E(G)$, $v_3v_5\notin E(G)$, $v_4v_5\notin E(G)$.
So $v_5v_6\notin E(G)$, otherwise $|N(v_1)\cap N(v_5)|=1$,
which contradicts to Lemma \ref{le:c-4}.

  Thus there are two new neighbors $v_7,v_8$ of $v_2$.
$|N(v_2)\cap N(v_5)|$ implies $v_5v_7\in E(G)$ or $v_5v_8\in E(G)$. Without loss of
generality, let $v_5v_7\in E(G)$ and $v_5v_8\notin E(G)$. Now we are
not sure whether $v_3v_7\in E(G)$, $v_4v_7\in E(G)$ or not,
We note that $v_3$ and $v_4$ are symmetric, so we only need to discuss the three cases
according to $|\{v_3v_7,v_4v_7\}\cap E(G)|$.

\textit{Case 4.1.} $|\{v_3,v_4\}\cap N(v_7)|=2$.

  In this case, $v_3v_7\in E(G)$ and $v_4v_7\in E(G)$.
Then $v_3v_8\in E(G)$ ($|N(v_2)\cap N(v_3)|$),
$v_4v_8\in E(G)$ ($|N(v_3)\cap N(v_4)|$).
$|N(v_1)\cap N(v_8)|$ implies $v_5v_8\in E(G)$ or $v_6v_8\in E(G)$, and not both.

   We deduce $v_5v_8\notin E(G)$, otherwise $|N(v_4)\cap N(v_5)|$ contradicts to
Lemma \ref{le:c-4}, so $v_5v_8\notin E(G)$ and $v_6v_8\in E(G)$.
Then $v_6v_7\notin E(G)$ ($|N(v_4)\cap N(v_6)|$), coupled with $v_5v_8\notin E(G)$ and
$|N(v_7)\cap N(v_8)|$, so $v_7$ and $v_8$ have another common neighbor,
denote it by $v_9$.
Then $v_5v_6\notin E(G)$ ($|N(v_1)\cap N(v_5)|$),
$v_5v_9\notin E(G)$ ($|N(v_5)\cap N(v_7)|$).
So $v_5$ has three new neighbors, denoted by $v_{10},v_{11},v_{12}$.
Then $v_6v_{10}\in E(G)$ ($|N(v_1)\cap N(v_{10})|$),
$v_9v_{10}\in E(G)$ ($|N(v_7)\cap N(v_{10})|$).
Similarly $v_6v_{11}\in E(G)$($|N(v_1)\cap N(v_{11})|$),
$v_9v_{11}\in E(G)$($|N(v_7)\cap N(v_{11})|$),
$v_6v_{12}\in E(G)$($|N(v_1)\cap N(v_{12})|$),
$v_9v_{12}\in E(G)$($|N(v_7)\cap N(v_{12})|$). Now
$N(v_{10})\supseteq\{v_5,v_6,v_9\}$,
$N(v_{11})\supseteq\{v_5,v_6,v_9\}$,
$N(v_{12})\supseteq\{v_5,v_6,v_9\}$.

\textit{Case 4.1.1.} $\{v_{10}v_{11},v_{10}v_{12},v_{11}v_{12}\}\cap
E(G)\neq\emptyset$.

Without loss of generality, let $v_{10}v_{11}\in E(G)$.
Then $v_{10}v_{12}\in E(G)$ ($|N(v_9)\cap N(v_{10})|$),
$v_{11}v_{12}\in E(G)$ ($|N(v_{10})\cap N(v_{11})|$).
Now $d(v_i)=5$ for $i=1,2,...,12$, then the graph is $G_9$ as shown in Figure \ref{Fig-1}.

\textit{Case 4.1.2.} $\{v_{10}v_{11},v_{10}v_{12},v_{11}v_{12}\}\cap
E(G)=\emptyset$.

  Similar to Case 4.1.1, we obtain the graph $G_{10}$ as shown in Figure \ref{Fig-1}.

\textit{Case 4.2.} $|\{v_3,v_4\}\cap N(v_7)|=1$.

Without loss of generality, let $v_3v_7\in E(G)$ and $v_4v_7\notin E(G)$.
Then $v_3v_8\in E(G)$ ($|N(v_2)\cap N(v_3)|$),
$|N(v_1)\cap N(v_7)|$ implies either $v_4v_7\in E(G)$ or $v_6v_7\in E(G)$, and not both.
Coupled with $v_4v_7\notin E(G)$, we conclude $v_6v_7\in E(G)$.
$|N(v_2)\cap N(v_7)|$ implies either $v_4v_7\in E(G)$ or $v_7v_8\in E(G)$, and not both.
Coupled with $v_4v_7\notin E(G)$, we conclude that $v_7v_8\in E(G)$.
Then $v_4v_8\notin E(G)$ ($|N(v_3)\cap N(v_4)|$).
There are two neighbors $v_9,v_{10}$ of $v_4$.
$|N(v_4)\cap N(v_5)|$ implies $v_5v_9\in E(G)$ or $v_5v_{10}\in E(G)$, and not both.
Without loss of generality, let $v_5v_9\in E(G)$ and $v_5v_{10}\notin E(G)$.
$|N(v_4)\cap N(v_6)|$ implies either $v_6v_9\in E(G)$ or $v_6v_{10}\in E(G)$, and not both.

   We claim that $v_6v_9\notin E(G)$. Otherwise $N(v_1)\cap N(v_9)=\{v_4,v_5,v_6\}$
and for the other neighbors $v_2$ and $v_3$ of $v_1$, $d(v_2)=d(v_3)=5$.
This contradicts to Lemma \ref{le:c-4}, so $v_6v_9\notin E(G)$ and $v_6v_{10}\in E(G)$.

  $|N(v_2)\cap N(v_9)|$ implies either $v_7v_9\in E(G)$ or $v_8v_9\in E(G)$, and not both.
But $d(v_7)=5$, so $v_8v_9\in E(G)$.
Similarly $v_8v_{10}\in E(G)$ ($|N(v_2)\cap N(v_{10})|$),
$v_5v_6\notin E(G)$ ($|N(v_1)\cap N(v_5)|$),
$v_5v_{10}\notin E(G)$ ($|N(v_4)\cap N(v_5)|$).
There are two new neighbors $v_{11},v_{12}$ of $v_5$.
Then $v_6v_{11}\in E(G)$ ($|N(v_1)\cap N(v_{11})|$).
Similarly $v_6v_{12}\in E(G)$, $v_9v_{10}\notin E(G)$ ($|N(v_8)\cap N(v_9)|$).
So far, $d(v_9)=d(v_{10})=3$, $d(v_{11})=d(v_{12})=2$.

\textit{Case 4.2.1.}
$\{v_9v_{11},v_9v_{12},v_{10}v_{11},v_{10}v_{12}\}\cap
E(G)\neq\emptyset$.

Without loss of generality, let $v_9v_{11}\in E(G)$.
Then $v_9v_{12}\in E(G)$ ($|N(v_6)\cap N(v_9)|$),
$v_{10}v_{11}\in E(G)$ ($|N(v_4)\cap N(v_{11})|$).
Similarly $v_{10}v_{12}\in E(G)$ ($N(v_4)\cap N(v_{12})$).
Then $v_{11}v_{12}\in E(G)$ ($|N(v_6)\cap N(v_{11})|$).
Since $d(v_i)=5$ for $i=1,2,...,12$, the graph is $G_{11}$
as shown in Figure \ref{Fig-1}.

\textit{Case 4.2.2.}
$\{v_9v_{11},v_9v_{12},v_{10}v_{11},v_{10}v_{12}\}\cap
E(G)=\emptyset$.

  Similarly, we obtain the graph $G_{12}$ with order $n\geq 12$ as shown in Figure \ref{Fig-1}.

\textit{Case 4.3.}  $|\{v_3,v_4\}\cap N(v_7)|=0$.

  In this case, $v_3v_7\notin E(G)$.
Then $v_3v_8\notin E(G)$ ($|N(v_2)\cap N(v_3)|$),
so there are another two new neighbors $v_9,v_{10}$ of $v_3$.
$|N(v_3)\cap N(v_5)|$ implies either $v_5v_9\in E(G)$ or $v_5v_{10}\in E(G)$,
and not both. Without loss of generality,
let $v_5v_9\in E(G)$ and $v_5v_{10}\notin E(G)$.
$v_4v_7\notin E(G)$ and $|N(v_2)\cap N(v_4)|$ imply $v_4v_8\notin
E(G)$. So far, we do not know whether $v_4v_9\in E(G)$ or not.
Next we will discuss the following two cases.

   \textit{Case 4.3.1.} $v_4v_9\in E(G)$.

  In this case, $v_4v_{10}\in E(G)$ ($|N(v_3)\cap N(v_4)|$),
$v_6v_9\in E(G)$ ($|N(v_1)\cap N(v_9)|$), and
$v_9v_{10}\in E(G)$ ($|N(v_3)\cap N(v_9)|$).
$|N(v_3)\cap N(v_7)|$ implies either $v_7v_9\in E(G)$ or $v_7v_{10}\in E(G)$,
and not both. Since $d(v_9)=5$, $v_7v_{10}\in E(G)$ and $v_7v_9\notin E(G)$.
$|N(v_3)\cap N(v_8)|$ implies either $v_8v_9\in E(G)$ or $v_8v_{10}\in E(G)$,
and not both. Since $d(v_9)=5$, $v_8v_9\notin E(G)$ and $v_8v_{10}\in E(G)$.
$|N(v_1)\cap N(v_8)|$ implies either $v_5v_8\in E(G)$ or $v_6v_8\in E(G)$,
and not both.

   We claim that $v_5v_8\notin E(G)$. Otherwise if $v_5v_8\in E(G)$,
then $|N(v_2)\cap N(v_5)|$ implies either $v_3v_5\in E(G)$ or
$v_4v_5\in E(G)$, and not both. But $d(v_3)=d(v_4)=5$, this is a
contradiction.

  So $v_5v_8\notin E(G)$, $v_6v_8\in E(G)$. So far,
$d(v_5)=d(v_6)=d(v_7)=d(v_8)=3$,
$|N(v_1)\cap N(v_5)|$ implies $v_5v_6\notin E(G)$. Coupled with $v_5v_8\notin E(G)$,
there are two new neighbors $v_{11},v_{12}$ of $v_5$.
Then $v_6v_{11}\in E(G)$ ($|N(v_1)\cap N(v_{11})|$). Similarly $v_6v_{12}\in E(G)$.
So far, $d(v_7)=d(v_8)=3$, $d(v_{11})=d(v_{12})=2$. This situation is similar to
Cases 4.2.1 and 4.2.2, and we obtain the class of graphs
$G_{13}$ as shown in Figure \ref{Fig-1}.

\textit{Case 4.3.2.} $v_4v_9\notin E(G)$.

 Similarly, we obtain the graph $G_{14}$ as shown in Figure \ref{Fig-1}.
\end{proof}

The graphs in Figure \ref{Fig-2} are the connected 5-regular
graphs with the even neighborhood
property that contains at least a triangle and
contains no $K_4$ as an induced subgraph.

\begin{theorem}\label{th:c-2} Let $G^\sigma$ be a connected 5-regular
optimum skew energy oriented graph on $n$ vertices,
if the underlying graph $G$ contains a triangle and
contains no $K_4$ as an induced subgraph, then $G$ must be one graph
depicted in Figure \ref{Fig-2}.
\end{theorem}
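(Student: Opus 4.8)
The plan is to mirror exactly the case-analysis strategy used in the proof of Theorem~\ref{th:c-1}, but now under the hypothesis that the underlying graph $G$ contains a triangle $K_3=\{v_1,v_2,v_3\}$ while no four vertices induce a $K_4$. As in the previous theorem, the two key structural tools are: $G$ is connected and $5$-regular, and by Lemma~\ref{le:c-4} every pair of distinct vertices has $|N(u)\cap N(v)|\in\{0,2,4\}$ (the even neighborhood property). I would begin by fixing the triangle and labelling the remaining neighbors of $v_1$ as $v_4,v_5$, and the remaining neighbors of $v_2$ and $v_3$; the no-$K_4$ assumption immediately forbids $v_1v_2v_3$ from being extended, so e.g.\ any common neighbor of two of the triangle vertices cannot be adjacent to the third. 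This is the analogue of Steps 1 and 2 of the procedure table, and it already pins down several non-edges.

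Next I would run the procedure-table algorithm (Steps~3 and~4) systematically. Starting from $|N(v_1)\cap N(v_2)|$ being even and the fact that $v_3\in N(v_1)\cap N(v_2)$, I deduce that $v_1,v_2$ must share a second common neighbor; the no-$K_4$ condition tells me it cannot be any vertex already adjacent to $v_3$ inside the triangle, so I must branch on how the ``third'' common neighbor sits relative to $v_4,v_5$ and the new neighbors of $v_2,v_3$. Each time an odd intersection $|N(v_i)\cap N(v_j)|$ appears, I either force a new edge among existing vertices or, when no such edge is consistent with $5$-regularity plus no-$K_4$, introduce a fresh vertex of degree $1$ and continue. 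Whenever the count of a common neighborhood would reach $3$ or $5$ between two already-saturated vertices, or whenever a forced edge would complete a $K_4$, that branch is terminated. I expect the branching to organize naturally by the sizes $|\{v_4,v_5\}\cap N(v_2)|$ and $|\{v_4,v_5\}\cap N(v_3)|$ and then, one level deeper, by whether certain triples of ``outer'' vertices are mutually adjacent (exactly the Case~X.1 versus Case~X.2 dichotomy of Theorem~\ref{th:c-1}, which produces either a finite graph closing up at that level or an infinite family growing in blocks of three).

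The bookkeeping is routine but voluminous, so in the write-up I would present one branch in full detail with its procedure table shown as a dotted-line-enclosed region (as promised in the paragraph following Case~2.1), then for the remaining branches I would only record: the forced non-edges from no-$K_4$, the sequence of forced edges with the justifying intersection $|N(v_i)\cap N(v_j)|$ in parentheses (exactly the telegraphic style of Cases~3.1--4.3.2), the point where a fresh vertex is introduced, and the terminal graph or infinite family it yields. The output is the list of graphs in Figure~\ref{Fig-2}, each verified to be $5$-regular, connected, triangle-containing, $K_4$-free, and to have the even neighborhood property.

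The main obstacle I anticipate is not any single hard argument but controlling the combinatorial explosion: without the $K_4$ that anchored Theorem~\ref{th:c-1}, there are more ways for the outer structure to develop, so I must be careful that the case tree is genuinely exhaustive and that no two leaves produce the same graph under relabelling. The most delicate sub-point will be the recurring ``triple of new vertices'' dichotomy (the analogue of Cases~2.2, 3.2.2, 3.3, 4.1.2, 4.2.2, 4.3.1): I need to argue cleanly that in the non-adjacent sub-case the construction either eventually closes into a specific finite graph or repeats periodically, giving a well-defined infinite family rather than an uncontrolled process, and that at every stage the even neighborhood property together with $5$-regularity leaves no further freedom than the one branch being followed.
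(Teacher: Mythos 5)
Your overall strategy coincides with the paper's: fix a triangle $v_1v_2v_3$, exploit Lemma \ref{le:c-4} together with the prohibition of an induced $K_4$ to force edges and non-edges step by step in the procedure-table style of Theorem \ref{th:c-1}, introducing new vertices only when no consistent edge among existing vertices exists. However, your setup contains a concrete slip: since $G$ is $5$-regular, $v_1$ has \emph{three} further neighbors $v_4,v_5,v_6$ beyond $v_2,v_3$, not two, so organizing the branching by $|\{v_4,v_5\}\cap N(v_2)|$ and $|\{v_4,v_5\}\cap N(v_3)|$ starts from a wrong configuration. The paper instead normalizes $v_2v_4\in E(G)$ (possible because $|N(v_1)\cap N(v_2)|$ is even and $v_3$ is a common neighbor), deduces $v_3v_4\notin E(G)$ from the no-$K_4$ hypothesis and then $v_3v_5\in E(G)$, and branches on the single dichotomy $v_4v_5\in E(G)$ versus $v_4v_6\in E(G)$; each branch is then driven to completion by parity arguments alone.

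The more substantial gap is that the enumeration itself, which is the entire content of this theorem, is deferred as ``routine but voluminous.'' In particular, your anticipated outcome of ``either a finite graph or an infinite family growing in blocks of three'' does not occur here: unlike several cases of Theorem \ref{th:c-1}, both branches of the triangle case close up after exactly twelve vertices, every vertex reaching degree $5$ with no residual freedom, yielding precisely the two graphs $G_{15}$ and $G_{16}$ of Figure \ref{Fig-2}. Without actually running the forced-edge analysis you cannot rule out infinite families, nor certify that no further graphs arise, so the conclusion that Figure \ref{Fig-2} is the complete list is not established by the proposal as written.
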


\begin{figure}[htbp]
\begin{centering}
\includegraphics[scale=0.6]{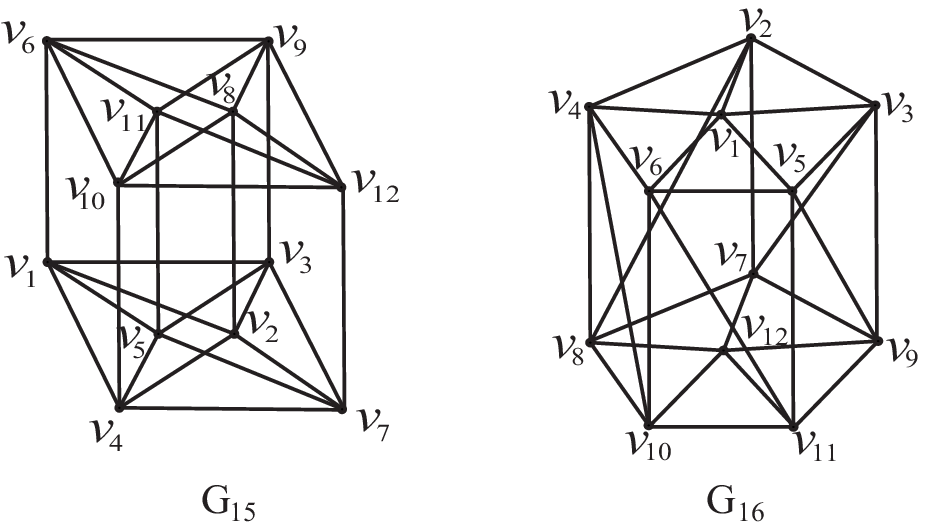}\\
\caption{All the possible underlying graphs for Theorem \ref{th:c-2}.}\label{Fig-2}
\end{centering}
\end{figure}


\begin{proof}
  We assume the three vertices of a triangle in $G$ are
$v_1,v_2,v_3$. Let $v_4,v_5,v_6$ be the other three neighbors of $v_1$.
By Lemma \ref{le:c-4}, we know
$v_2$ must be adjacent to at least one vertex of $v_4,v_5,v_6$.
Without loss of generality, let $v_2v_4\in E(G)$.
Since $N(v_1)\cap N(v_3)\supseteq \{v_2\}$, $v_3$ must be adjacent to at least one
vertex of $v_4,v_5,v_6$. We deduce $v_3v_4\notin E(G)$, otherwise the graph has an
induced subgraph $K_4$ with the induced vertices $v_1,v_2,v_3,v_4$.
Then $v_3v_5\in E(G)$ or $v_3v_6\in E(G)$, without loss of generality,
we assume that $v_3v_5\in E(G)$ and $v_3v_6\notin E(G)$.
$|N(v_1)\cap N(v_4)|$ implies either $v_4v_5\in E(G)$ or $v_4v_6\in
E(G)$. Next we will discuss the cases
according to whether $v_4v_5\in E(G)$ or not.

\textit{Case 1.} $v_4v_5\in E(G)$.

  We deduce $v_2v_5\notin E(G)$ and $v_2v_6\notin E(G)$,
otherwise if $v_2v_5\in E(G)$, then $v_2v_6\in E(G)$ ($|N(v_1)\cap N(v_2)|$).
Thus the graph has an induced subgraph $K_4$ with
vertices $v_1,v_2,v_3,$ $v_5$, so $v_2v_5\notin E(G)$.
Then $v_2v_6\notin E(G)$ ($|N(v_1)\cap N(v_2)|$).

   Thus there are two new neighbors $v_7, v_8$ of $v_2$.
$|N(v_2)\cap N(v_3)|$ and $v_3v_4\notin E(G)$ imply either
$v_3v_7\in E(G)$ or $v_3v_8\in E(G)$, and not both.
Without loss of generality, let $v_3v_7\in E(G)$ and $v_3v_8\notin E(G)$.
Then $v_3v_6\notin E(G)$ ($|N(v_1)\cap N(v_3)|$), and thus there is a
new neighbors $v_9$ of $v_3$. $|N(v_3)\cap N(v_4)|$ implies either
$v_4v_7\in E(G)$ or $v_4v_9\in E(G)$, and not both.

  We deduce $v_4v_9\notin E(G)$. Otherwise if $v_4v_9\in E(G)$ and $v_4v_7\notin E(G)$,
then $v_4v_8\in E(G)$ ($|N(v_2)\cap N(v_4)|$),
$|N(v_2)\cap N(v_5)|$ implies either $v_5v_7\in E(G)$ or $v_5v_8\in E(G)$, not
both.

  If $v_5v_7\notin E(G)$ and $v_5v_8\in E(G)$, then
$v_5v_9\in E(G)$ ($|N(v_3)\cap N(v_5)|$).
We deduce $d(v_4)=d(v_5)=5$ and $|N(v_4)\cap N(v_5)|=3$.
This contradicts to Lemma \ref{le:c-4}.

  If $v_5v_7\in E(G)$ and $v_5v_8\notin E(G)$,
then $v_5v_9\in E(G)$ ($|N(v_4)\cap N(v_5)|$).
We deduce $d(v_3)=d(v_5)=5$ and $|N(v_3)\cap N(v_5)|=3$.
This is a contradiction to Lemma \ref{le:c-4}.

  So $v_5v_7\notin E(G)$ and $v_5v_8\notin E(G)$.
Therefore $v_4v_9\notin E(G)$ and $v_4v_7\in E(G)$.

  We deduce $v_4v_6\notin E(G)$ ($|N(v_1)\cap N(v_4)|$),
$v_4v_8\notin E(G)$ ($|N(v_2)\cap N(v_4)|$) and
$v_4v_9\notin E(G)$ ($|N(v_3)\cap N(v_4)|$),
so there is a new neighbor $v_{10}$ of $v_4$.
$|N(v_2)\cap N(v_5)|$ implies either $v_5v_7\in E(G)$ or $v_5v_8\in E(G)$,
and not both.
We claim $v_5v_8\notin E(G)$ and $v_5v_7\in E(G)$. Otherwise if $v_5v_8\in E(G)$ and
$v_5v_7\notin E(G)$, then $v_5v_9\in E(G)$ ($|N(v_3)\cap N(v_5)|$).
But $d(v_4)=d(v_5)=5$ and $|N(v_4)\cap N(v_5)|=1$,
this contradicts to Lemma \ref{le:c-4}.
So $v_5v_8\notin E(G)$ and $v_5v_7\in E(G)$.

  We deduce $v_5v_6\notin E(G)$ ($|N(v_1)\cap N(v_5)|$),
$v_5v_8\notin E(G)$ ($|N(v_2)\cap N(v_5)|$),
$v_5v_9\notin E(G)$ ($|N(v_3)\cap N(v_5)|$),
and $v_5v_{10}\notin E(G)$ ($|N(v_4)\cap N(v_5)|$).
So there is a new neighbor $v_{11}$ of $v_5$.
Then $v_6v_{11}\in E(G)$ ($|N(v_1)\cap N(v_{11})|$).
$|N(v_2)\cap N(v_6)|$ implies $v_6v_7\in E(G)$ or $v_6v_8\in E(G)$, and not both.
But if $v_6v_7\in E(G)$, then $|N(v_1)\cap N(v_7)|=5$, so
$v_6v_7\notin E(G)$ and $v_6v_8\in E(G)$.
Then $v_6v_9\in E(G)$ ($|N(v_3)\cap N(v_6)|$),
$v_6v_{10}\in E(G)$ ($|N(v_4)\cap N(v_6)|$),
$v_7v_8\notin E(G)$ ($|N(v_2)\cap N(v_7)|$),
$v_7v_9\notin E(G)$ ($|N(v_3)\cap N(v_7)|$),
$v_7v_{10}\notin E(G)$ ($|N(v_6)\cap N(v_7)|$),
and $v_7v_{11}\notin E(G)$ ($|N(v_6)\cap N(v_7)|$).
So there is a new neighbor $v_{12}$ of $v_7$.
Then $v_8v_{12}\in E(G)$ ($|N(v_2)\cap N(v_{12})|$),
$v_9v_{12}\in E(G)$ ($|N(v_3)\cap N(v_{12})|$),
$v_{10}v_{12}\in E(G)$ ($|N(v_4)\cap N(v_{12})|$),
$v_{11}v_{12}\in E(G)$ ($|N(v_5)\cap N(v_{12})|$),
$v_8v_9\in E(G)$ ($|N(v_3)\cap N(v_8)|$),
$v_9v_{11}\in E(G)$ ($|N(v_5)\cap N(v_9)|$),
$v_8v_{10}\in E(G)$ ($|N(v_4)\cap N(v_8)|$),
and $v_{10}v_{11}\in E(G)$ ($|N(v_9)\cap N(v_{10})|$).
Since $d(v_i)=5$ for $i=1,2,...,12$, the graph is $G_{15}$ as shown in Figure \ref{Fig-2}.

\textit{Case 2.} $v_4v_5\notin E(G)$.

  In this case we have $v_4v_6\in E(G)$.
$|N(v_1)\cap N(v_5)|$ implies either $v_2v_5\in E(G)$ or $v_5v_6\in E(G)$, and not both.
But if $v_2v_5\in E(G)$, then there is an induced subgraph $K_4$ with the
induced vertices $v_1,v_2,v_3,v_5$.
So $v_2v_5\notin E(G)$ and $v_5v_6\in E(G)$.
Then $v_2v_6\notin E(G)$ ($|N(v_1)\cap N(v_2)|$).
So there are two new neighbors $v_7,v_8$ of $v_2$.
$|N(v_2)\cap N(v_3)|$ implies either $v_3v_7\in E(G)$ or $v_3v_8\in E(G)$,
and not both. Without loss of generality,
let $v_3v_7\in E(G)$ and $v_3v_8\notin E(G)$.
Now $v_3v_4\notin E(G)$, otherwise there is an induced
subgraph $K_4$ with induce vertices $v_1,v_2,v_3,v_4$.
Then $v_3v_6\notin E(G)$ ($|N(v_1)\cap N(v_3)|$),
$v_3v_8\notin E(G)$ ($|N(v_2)\cap N(v_3)|$).
So there is a new neighbor $v_9$ of $v_3$.
$|N(v_2)\cap N(v_4)|$ implies either $v_4v_7\in E(G)$ or $v_4v_8\in E(G)$,
and not both.

  We claim $v_4v_7\notin E(G)$ and $v_4v_8\in E(G)$. Otherwise if $v_4v_7\in E(G)$,
$v_4v_9\in E(G)$ ($|N(v_3)\cap N(v_4)|$),
$|N(v_7)\cap N(v_1)|$ implies either $v_5v_7\in E(G)$ or $v_6v_7\in E(G)$.

  If $v_5v_7\in E(G)$, then $v_5v_9\in E(G)$ ($|N(v_4)\cap N(v_5)|$).
But $|N(v_3)\cap N(v_5)|=3$ and $d(v_3)=d(v_5)=5$,
this contradicts to Lemma \ref{le:c-4}.

  If $v_6v_7\in E(G)$, similarly we obtain $v_6v_9\in E(G)$,
$|N(v_4)\cap N(v_6)|=3$ and $d(v_4)=d(v_6)=5$, this
contradicts to Lemma \ref{le:c-4}.

   Thus we know $v_4v_7\notin E(G)$ and $v_4v_8\in E(G)$.
Then $v_4v_5\notin E(G)$ ($|N(v_1)\cap N(v_4)|$),
$|N(v_3)\cap N(v_4)|$ implies $v_4v_7\notin E(G)$ and $v_4v_9\notin E(G)$,
so there is a new neighbor $v_{10}$ of $v_4$.
Then $v_7v_8\in E(G)$ ($|N(v_2)\cap N(v_7)|$).
$|N(v_3)\cap N(v_5)|$ implies either $v_5v_7\in E(G)$ or $v_5v_9\in E(G)$,
and not both.
We claim $v_5v_7\notin E(G)$, otherwise $v_5v_7\in E(G)$ and
$v_6v_7\in E(G)$ ($|N(v_1)\cap N(v_7)|$). But $d(v_4)=d(v_7)=5$ and
$|N(v_4)\cap N(v_7)|=3$, this contradicts to Lemma \ref{le:c-4}.
So $v_5v_7\notin E(G)$ and $v_5v_9\in E(G)$.

  Now $v_5v_8\notin E(G)$ ($|N(v_4)\cap N(v_5)|$),
$v_5v_{10}\notin E(G)$ ($|N(v_4)\cap N(v_5)|$).
So there is a new neighbor $v_{11}$ of $v_5$.
Then $v_7v_9\in E(G)$ ($|N(v_3)\cap N(v_9)|$),
$v_6v_{10}\in E(G)$ ($|N(v_1)\cap N(v_{10})|$),
$v_6v_{11}\in E(G)$ ($|N(v_1)\cap N(v_{11})|$),
$v_7v_{10}\notin E(G)$ ($|N(v_6)\cap N(v_7)|$),
$v_7v_{11}\notin E(G)$ ($|N(v_6)\cap N(v_7)|$).
So there is a new neighbor $v_{12}$ of $v_7$.
Then $v_8v_{12}\in E(G)$ ($|N(v_2)\cap N(v_{12})|$),
$v_9v_{12}\in E(G)$ ($|N(v_3)\cap N(v_{12})|$),
$v_{10}v_{12}\in E(G)$ ($|N(v_4)\cap N(v_{12})|$),
$v_{11}v_{12}\in E(G)$ ($|N(v_5)\cap N(v_{12})|$),
$v_8v_{10}\in E(G)$ ($|N(v_4)\cap N(v_8)|$),
and $v_9v_{11}\in E(G)$ ($|N(v_5)\cap N(v_9)|$),
$v_{10}v_{11}\in E(G)$ ($|N(v_9)\cap N(v_{10})|$).
Since $d(v_i)=5$ for $i=1,2,...,12$, the graph is $G_{16}$
as shown in Figure \ref{Fig-2}.

\end{proof}

The graphs in Figure \ref{Fig-3} are the connected 5-regular
graphs with the even neighborhood
property that contains no $K_3$ as an induced subgraph.

\noindent\begin{theorem}\label{th:c-3} Let $G^\sigma$ be a connected 5-regular
optimum skew energy oriented graph on $n$ vertices.
If the underlying graph $G$ contains no triangle, then $G$ is
one of the graphs depicted in Figure \ref{Fig-3}.
\end{theorem}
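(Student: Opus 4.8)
I would simply re-run the procedure-table algorithm of Section~2, now with the standing hypothesis that $G$ is triangle-free. Fix a vertex $v_1$ with $N(v_1)=\{v_2,v_3,v_4,v_5,v_6\}$ (Step~1). Since $G$ contains no $K_3$, the set $\{v_2,\dots,v_6\}$ is independent, so for every pair $i\neq j$ in $\{2,\dots,6\}$ we have $v_1\in N(v_i)\cap N(v_j)$ and Lemma~\ref{le:c-4} forces $|N(v_i)\cap N(v_j)|\in\{2,4\}$; i.e.\ each such pair has exactly one or exactly three common neighbours besides $v_1$. This observation plays the role of Step~2 here. I would then branch on the common-neighbour pattern of the pairs inside $N(v_1)$, the principal dividing line being whether there exists a second vertex $w\neq v_1$ adjacent to all of $v_2,\dots,v_6$ (which forces $d(w)=5$, $N(w)=N(v_1)$, a twin of $v_1$, and pushes $G$ towards a bipartite-type structure such as $\mathbf{Q}_5$ or $K_{6,6}$ minus a perfect matching), versus the case where the common neighbours of these pairs are distributed over several distinct vertices.

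In each branch I would carry out Steps~3 and~4 exactly as in Theorems~\ref{th:c-1} and~\ref{th:c-2}: whenever some $|N(v_i)\cap N(v_j)|$ turns out odd, the only admissible repairs are to add an edge between two vertices already in the table or to introduce a fresh vertex adjacent to $v_i$ (or $v_j$); crucially, the no-$K_3$ hypothesis forbids adding \emph{any} edge inside an already-known neighbourhood, which prunes the majority of the branches that survive in the earlier theorems. Each surviving branch is then pushed until either every listed vertex has degree $5$ (the graph closes up, e.g.\ to $\mathbf{Q}_5$ or to the other sporadic graphs of Figure~\ref{Fig-3}) or the forced local pattern becomes periodic, in which case the branch yields one of the infinite ``ladder-type'' families in Figure~\ref{Fig-3}. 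The accompanying procedure tables are produced just as Table~2 records the derivation of $G_1$ and $G_2$.

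The main obstacle is the termination/finiteness of the enumeration. Without a triangle (let alone a $K_4$) to absorb neighbours, the four non-$v_1$ neighbours of each new vertex keep spawning still further new vertices, so the subcase trees here are deeper than in the two previous theorems; in every branch one must argue that the accumulated constraints eventually either pin all vertex degrees down to $5$ simultaneously or admit only a single periodic continuation — so that no genuinely new sporadic graph slips through. A secondary, purely organisational difficulty is to keep the branching exhaustive only up to the symmetry of the initial configuration (the permutations of $v_2,\dots,v_6$ together with the freedom in labelling the forced common neighbours), so that the list in Figure~\ref{Fig-3} is simultaneously complete and irredundant.

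As in Theorems~\ref{th:c-1} and~\ref{th:c-2}, the even-neighbourhood property (Lemma~\ref{le:c-4}) is only a necessary condition for optimum skew energy, so the theorem only reduces the possible underlying graph to the finitely many graphs and infinite families displayed in Figure~\ref{Fig-3}; deciding which of these actually admit an optimum orientation — equivalently, carry a skew-symmetric weighing matrix $W(n;5)$ supported on their edges — is a separate matter taken up afterwards.
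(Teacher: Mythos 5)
Your overall strategy coincides with the paper's: fix $v_1$ with $N(v_1)=\{v_2,\dots,v_6\}$, observe that triangle-freeness makes $N(v_1)$ independent, and then run the procedure-table enumeration driven by Lemma \ref{le:c-4}. However, the case division you propose as the backbone of the argument contains a concrete error. A vertex $w\neq v_1$ adjacent to all of $v_2,\dots,v_6$ cannot exist at all: since $d(w)=5$ it would satisfy $N(w)=N(v_1)$, hence $|N(v_1)\cap N(w)|=5$, which is odd and contradicts Lemma \ref{le:c-4} on the spot. So your ``principal dividing line'' separates a vacuous case from everything else, and it certainly does not ``push $G$ towards'' $\mathbf{Q}_5$ or $K_{6,6}$ minus a perfect matching --- neither of those graphs has two vertices with identical neighbourhoods, so both would have to be recovered inside your other branch anyway. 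What remains of your plan is then the unstructured task of classifying the intersection pattern of the ten pairs inside $N(v_1)$ (each of size $2$ or $4$), which is precisely the part of the proof that needs a workable organizing scheme and is missing from the proposal.

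The paper supplies that scheme differently: it introduces the four new neighbours $v_7,\dots,v_{10}$ of $v_2$ and sets $a_i=|N(v_1)\cap N(v_{i+6})|\in\{2,4\}$ (each of these intersections contains $v_2$), so that up to symmetry only the five vectors $(4,4,4,4)$, $(4,4,4,2)$, $(4,4,2,2)$, $(4,2,2,2)$, $(2,2,2,2)$ need to be treated; in the last case it further exploits the ``one-to-one'' correspondence between $N(u_1)-\{u_2\}$ and $N(u_2)-\{u_1\}$ for adjacent $u_1,u_2$, which is what controls the growth of the table and produces the infinite families in Figure \ref{Fig-3}. You correctly identify the two genuine difficulties (termination of the branch trees and exhaustiveness up to symmetry), but you do not resolve them, and without a case split of the above kind the enumeration you describe is not yet a proof. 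Your closing remark --- that the even-neighbourhood property is only necessary and that the existence of optimum orientations is a separate question settled afterwards --- does match the structure of the paper.
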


\begin{figure}[htbp]
\begin{centering}
\includegraphics[scale=0.6]{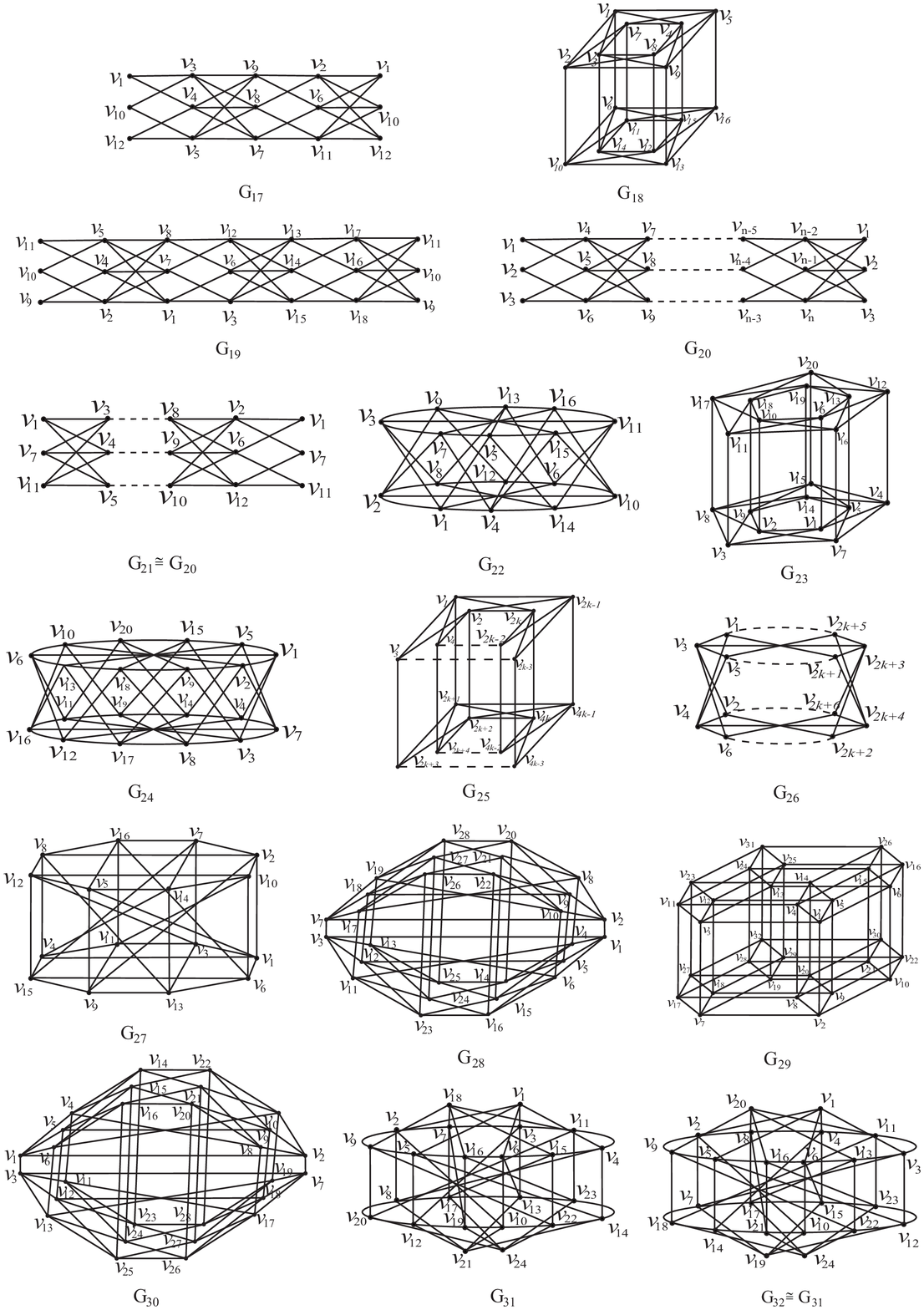}\\
\caption{All the possible underlying graphs for Theorem \ref{th:c-3} }\label{Fig-3}
\end{centering}
\end{figure}

%






\begin{proof}\noindent
   Since the graph $G$ is triangle-free, any two vertices which are
adjacent to a common vertex in $G$ are not be adjacent.


  Let $v_1\in V(G)$ and $N(v_1)=\{v_2,v_3,v_4,v_5,v_6\}$,
then $v_2$ is not adjacent to any vertices of $v_3,v_4,v_5,v_6$.
So there are four new neighbors $v_7,v_8,v_9,v_{10}$ of $v_2$.
Now $N(v_1)\cap N(v_7)\supseteq\{v_2\}$, by Lemma \ref{le:c-4}
we know the possible values of $|N(v_1)\cap N(v_7)|$ are 2 or
4. Similarly $|N(v_1)\cap N(v_8)|=$ 2 or 4, $|N(v_1)\cap N(v_9)|=$
2 or 4, $|N(v_1)\cap N(v_{10})|=$ 2 or 4.
Let $a_i=|N(v_1)\cap N(v_{i+6})|$, for $i=1,2,3,4$. By symmetry,
we only need to consider the following cases according to the vector $(a_1,a_2,a_3,a_4)$.
They are $(4,4,4,4)$, $(4,4,4,2)$, $(4,4,2,2)$, $(4,2,2,2)$, $(2,2,2,2)$.
We assume $S_1=\{v_3,v_4,v_5,v_6\}$ and $S_2=\{v_7,v_8,v_9,v_{10}\}$ for convenience.

\textit{Case 1.} $(a_1,a_2,a_3,a_4)=(4,4,4,4)$.

   Now every vertex of $S_1$ and $S_2$ has just three neighbors in the other set.

   Firstly, we prove the four sets
$N(v_7)\cap S_1, N(v_8)\cap S_1, N(v_9)\cap S_1, N(v_{10})\cap S_1$
are different each other. Otherwise, without loss of generality, let
$N(v_7)\cap S_1 = N(v_8)\cap S_1$, and $v_3v_7\in E(G)$, $v_4v_7\in
E(G)$, $v_5v_7\in E(G)$, $v_3v_8\in E(G)$, $v_4v_8\in E(G)$, and
$v_5v_8\in E(G)$. Now $N(v_7)\supseteq\{v_2,v_3,v_4,v_5\}$,
$N(v_8)\supseteq\{v_2,v_3,v_4,v_5\}$,
$N(v_1)=\{v_2,v_3,v_4,v_5,v_6\}$, obviously $v_6v_7\notin E(G)$, and
$v_6v_8\notin E(G)$. Since $N(v_2)\cap N(v_6)\supseteq\{v_1\}$,
$v_6v_9\in E(G)$ or $v_6v_{10}\in E(G)$,
and not both. While $N(v_9)\supseteq\{v_2\}$,
$N(v_{10})\supseteq\{v_2\}$, without loss of generality we assume
$v_6v_9\in E(G)$ and $v_6v_{10}\notin E(G)$.

  Since $a_3=|N(v_1)\cap N(v_9)|=4$ and $v_2v_9\in E(G)$, $v_6v_9\in E(G)$,
and there are two neighbors of $v_9$ in $\{v_3,v_4,v_5\}$.
Without loss of generality let
$v_3v_9\in E(G)$, $v_4v_9\in E(G)$ and $v_5v_9\notin E(G)$. Now
$N(v_3)\supseteq\{v_1,v_7,v_8,v_9\}$, $N(v_4)\supseteq\{v_1,v_7,v_8,v_9\}$,
$N(v_2)=\{v_1,v_7,v_8,v_9,v_{10}\}$. Then $v_3v_{10}\notin E(G)$,
$v_4v_{10}\notin E(G)$, and $a_4\leqslant5-2=3$,
this contradicts to $a_4=4$.

  Because $(a_1,a_2,a_3,a_4)=(4,4,4,4)$, $S_1=\{v_3,v_4,v_5,v_6\}$ and $S_2=\{v_7,v_8,v_9,v_{10}\}$,
every vertex of $S_2$ is not adjacent to one vertex of $S_1$.
From the proof above, we know any
two vertices of $S_2$ have not three common neighbors in $S_1$.
Without loss of generality, let $v_4v_7\in E(G)$, $v_5v_7\in E(G)$,
$v_6v_7\in E(G)$, $v_3v_7\notin E(G)$, $v_3v_8\in E(G)$, $v_5v_8\in
E(G)$, $v_6v_8\in E(G)$, $v_4v_8\notin E(G)$, $v_3v_9\in E(G)$,
$v_4v_9\in E(G)$, $v_6v_9\in E(G)$, $v_5v_9\notin E(G)$,
$v_3v_{10}\in E(G)$, $v_4v_{10}\in E(G)$, $v_5v_{10}\in E(G)$,
$v_6v_{10}\notin E(G)$. The four sets $N(v_7)\cap
S_1, N(v_8)\cap S_1, N(v_9)\cap S_1, N(v_{10})\cap S_1$ are different
to each other.

  Since the graph contains no $K_3$,
then $v_7v_8\notin E(G)$, $v_7v_9\notin E(G)$, $v_7v_{10}\notin
E(G)$. $v_3v_7\notin E(G)$, otherwise $|N(v_2)\cap N(v_3)|=5$.
Therefore there is a new neighbor $v_{11}$ of $v_7$.
Then $v_8v_{11}\in E(G)$ ($|N(v_7)\cap N(v_8)|$),
$v_9v_{11}\in E(G)$ ($|N(v_7)\cap N(v_9)|$),
$v_{10}v_{11}\in E(G)$ ($|N(v_7)\cap N(v_{10})|$).
Since the graph contains no triangle, similarly there is a
new neighbor $v_{12}$ of $v_3$.
Then $v_4v_{12}\in E(G)$ ($|N(v_3)\cap N(v_4)|$),
$v_5v_{12}\in E(G)$ ($|N(v_3)\cap N(v_5)|$),
$v_6v_{12}\in E(G)$ ($|N(v_3)\cap N(v_6)|$).
$|N(v_6)\cap N(v_{11})|$ and $d(v_1)=5$ imply $v_{11}v_{12}\in E(G)$.
Since $d(v_i)=5$ for $i=1,2,\ldots,12$, the graph is $G_{17}$ as
shown in Figure \ref{Fig-3}.

\textit{Case 2.} $(a_1,a_2,a_3,a_4)=(4,4,4,2)$.

  By using a method similar to that used in Case 1 of Theorem \ref{th:c-3},
we find the underlying graph $G_{18}$ depicted in Figure \ref{Fig-3}.

\textit{Case 3.} $(a_1,a_2,a_3,a_4)=(4,4,2,2)$.

 Similarly, we find $G_{19}$ and $G_{20}$ as shown in Figure \ref{Fig-3}.

\textit{Case 4.} $(a_1,a_2,a_3,a_4)=(4,2,2,2)$.

 Similarly, we find $G_{21},G_{22},G_{23},G_{24},G_{25},G_{26}$ as shown in Figure \ref{Fig-3}.

\textit{Case 5.} $(a_1,a_2,a_3,a_4)=(2,2,2,2)$.

  Now let $G^3=G[S_1\cup S_2]=G[{v_3,\ldots,v_{10}}]$.
Since $(a_1,a_2,a_3,a_4)=(2,2,2,2)$, $|E(G^3)|=(2-1)+(2-1)+(2-1)+(2-1)=4$.
$N(v_2)\cap N(v_3)=\{v_1\}$, so we claim every vertex of $S_1$ and $S_2$
has just one neighbor in the other set.
We say that such vertex sets $S_1$ and $S_2$ are {\it one-to-one} for convenience.

  From $(a_1,a_2,a_3,a_4)=(2,2,2,2)$, we know $N(v_1)-\{v_2\}=S_1$ and
$N(v_2)-\{v_1\}=S_2$ are {\it one-to-one}.
For any two adjacent vertices $u_1,u_2$ in $V(G)$, we can assume
that $N(u_1)-\{u_2\}$ and $N(u_2)-\{u_1\}$ are {\it one-to-one}, since
Case 5 is the last case of the discussion for proving Theorem \ref{th:c-3}.
If there are two adjacent vertices $u'_1,u'_2$ and $N(u'_1)-\{u'_2\}$ and
$N(u'_2)-\{u'_1\}$ are not {\it one-to-one}, then $N(u'_1)-\{u'_2\}$ and
$N(u'_2)-\{u'_1\}$ must belong to one of Case 1, 2, 3 or 4 by changing vertices
$u_1,u_2$ to $v_1,v_2$, respectively. So our assume is reasonable.

  By symmetry and the one-oneness of $S_1$ and $S_2$, we may assume
$v_3v_7\in E(G)$, $v_4v_8\in E(G)$, $v_5v_9\in E(G)$, and $v_6v_{10}\in E(G)$.
Then $v_3v_8\notin E(G)$, $v_3v_9\notin E(G)$,
$v_3v_{10}\notin E(G)$,  $v_4v_7\notin E(G)$, $v_4v_9\notin E(G)$,
$v_4v_{10}\notin E(G)$, $v_5v_7\notin E(G)$, $v_5v_8\notin E(G)$,
$v_5v_{10}\notin E(G)$, $v_6v_7\notin E(G)$, $v_6v_8\notin E(G)$, and
$v_6v_9\notin E(G)$. The graph is triangle-free, so $v_3v_4\notin
E(G)$, $v_3v_5\notin E(G)$, and $v_3v_6\notin E(G)$. Thus there are
three new neighbors of $v_3$, denoted by $v_{11},v_{12},v_{13}$.
Since $v_1v_3\in E(G)$, $N(v_1)-\{v_3\}=\{v_2,v_4,v_5,v_6\}$ and
$N(v_3)-\{v_1\}=\{v_7,v_{11},v_{12},v_{13}\}$ are {\it one-to-one}, coupled
with $v_2v_7\in E(G)$ and $N(v_{11})\supseteq\{v_3\}$,
$N(v_{12})\supseteq\{v_3\}$, and $N(v_{13})\supseteq\{v_3\}$.
Without loss of generality, let $v_4v_{11}\in E(G)$, $v_5v_{12}\in E(G)$,
and $v_6v_{13}\in E(G)$.

  The graph is triangle-free, so $v_3v_5\notin E(G)$, and $v_3v_6\notin E(G)$.
Since $N(v_1)-\{v_2\}$ and $N(v_2)-\{v_1\}$ are {\it one-to-one},
$v_4v_7\notin E(G)$, $v_4v_9\notin E(G)$, $v_4v_{10}\notin E(G)$.
Since $N(v_1)-\{v_3\}$ and $N(v_3)-\{v_1\}$ are {\it one-to-one},
$v_4v_{12}\notin E(G)$, $v_4v_{13}\notin E(G)$. There are two new
neighbors $v_{14},v_{15}$ of $v_4$. Since $v_1v_4\in E(G)$,
$N(v_1)-\{v_4\}$ and $N(v_4)-\{v_1\}$ are {\it one-to-one}.
Then $N(v_1)-\{v_4\}=\{v_2,v_3,v_5,v_6\},
N(v_4)-\{v_1\}=\{v_8,v_{11},v_{14},v_{15}\}$, $v_2v_8\in E(G)$,
$v_3v_{11}\in E(G)$, $N(v_{14})\supseteq\{v_4\}$, and
$N(v_{15})\supseteq\{v_4\}$. Without loss of generality, let
$v_5v_{14}\in E(G)$, and $v_6v_{15}\in E(G)$.

   The graph is triangle-free, so $v_5v_6\notin E(G)$,
$N(v_1)-\{v_2\}$ and $N(v_2)-\{v_1\}$ are {\it one-to-one}. So
$v_5v_7\notin E(G)$, $v_5v_8\notin E(G)$, $v_5v_{10}\notin E(G)$,
and $v_6v_9\notin E(G)$. Since $N(v_1)-\{v_3\}$ and $N(v_3)-\{v_1\}$
are {\it one-to-one}, so $v_5v_{11}\notin E(G)$,
$v_5v_{13}\notin E(G)$, $v_6v_{12}\notin E(G)$.
Since $N(v_1)-\{v_4\}$ and $N(v_4)-\{v_1\}$ are {\it one-to-one},
so $v_5v_{15}\notin E(G)$, $v_6v_{14}\notin E(G)$.
Then there is a new neighbor of $v_5$, say $v_{16}$.
$|N(v_5)\cap N(v_6)|$ implies $v_6v_{16}\in E(G)$ or $v_6v_{12}\in E(G)$.
Since $N(v_1)-\{v_3\}$ and $N(v_3)-\{v_1\}$ are {\it one-to-one},
$v_6v_{12}\notin E(G)$. Thus $v_6v_{16}\in E(G)$.
The graph is triangle-free so $v_7v_8\notin E(G)$, $v_7v_9\notin E(G)$,
$v_7v_{10}\notin E(G)$, $v_7v_{11}\notin E(G)$, $v_7v_{12}\notin E(G)$,
and $v_7v_{13}\notin E(G)$.
Let $B=\{v_7v_{14},v_7v_{15},v_7v_{16},v_8v_{12},v_8v_{13},v_8v_{16},
v_9v_{11},$ $v_9v_{13},v_9v_{15},v_{10}v_{11},v_{10}v_{12},v_{10}v_{14},
v_{11}v_{16},v_{12}v_{15},$\\$v_{13}v_{14}\}$.
Next we will discuss the following two cases.

\textit{Case 5.1.} $E(G)\cap B\neq\emptyset$.

 By using a method similar to that used in Case 1 of Theorem \ref{th:c-3},
we find $G_{27}$ as shown in Figure \ref{Fig-3}.

\textit{Case 5.2.} $E(G)\cap B=\emptyset$.

  Similarly, we find $G_{28},G_{29},G_{30},G_{31},G_{32}$ as shown in Figure \ref{Fig-3}.

  Now we have found all possible underlying graphs of
connected 5-regular optimum skew energy oriented graphs with the even neighborhood property.
\end{proof}

By Theorems \ref{th:c-1}, \ref{th:c-2} and \ref{th:c-3}, we obtain fourteen classes of all possible
underlying graphs of connected 5-regular oriented graphs of order $n$ with optimum skew energy.
These fourteen classes of underlying graphs are
(\uppercase\expandafter{\romannumeral 1}) $G_1$,
(\uppercase\expandafter{\romannumeral 2}) $G_2$, $G_3$, $G_{9}$, $G_{10}$,
(\uppercase\expandafter{\romannumeral 3}) $G_4$, $G_5$, $G_6$,
(\uppercase\expandafter{\romannumeral 4}) $G_7$, $G_8$, $G_{11}$, $G_{12}$, $G_{13}$,
(\uppercase\expandafter{\romannumeral 5}) $G_{14}$,
(\uppercase\expandafter{\romannumeral 6}) $G_{15}$,
(\uppercase\expandafter{\romannumeral 7}) $G_{16}$,
(\uppercase\expandafter{\romannumeral 8}) $G_{17},G_{19},G_{20},G_{21}$,
(\uppercase\expandafter{\romannumeral 9}) $G_{18},G_{23},G_{25}$,
(\uppercase\expandafter{\romannumeral 10}) $G_{22},G_{24},G_{26}$,
(\uppercase\expandafter{\romannumeral 11}) $G_{27}$,
(\uppercase\expandafter{\romannumeral 12}) $G_{28}, G_{30}$,
(\uppercase\expandafter{\romannumeral 13}) $G_{29}$,
(\uppercase\expandafter{\romannumeral 14}) $G_{31},G_{32}$.

\section{All connected 5-regular optimum skew energy oriented graphs}
\label{oriented-sec}

  In this section, we will prove exactly eleven classes
of connected 5-regular graphs obtained in Section 2 have
optimum orientations.

  Let $C_n=v_1v_2,\ldots,v_n$ be a cycle on $n(\geq3)$ vertices and
$C'_{n}=u_1u_2,\ldots,u_n$ is a copy
of $C$. We define $U_n=(V(U_n),E(U_n))$ to be the graph obtained
from $C_n$ and $C'_{n}$ by inserting edges from $v_i$ to $u_{i-1(mod
\ n)}$ and $u_{i+1(mod \ n)}$ for $i=1,2,\ldots,n$, see Fig. 2.1 in \cite{GXZ}.

  The Cartesian product of $G_1$ and $G_2$, denoted by $G_1\square G_2$,
is defined as
a graph with vertex set $V(G_{1})\times V(G_{2})$.
Suppose $v_1, v_2\in V(G_1)$ and $u_1,u_2\in V(G_2)$.
Then $(u_1,v_1)$ and $(u_2,v_2)$ are adjacent in $G_1\square G_2$ if and only if
$u_1u_2\in E(G_1),v_1=v_2$ or $u_1=u_2,v_1v_2\in E(G_2)$.
For example, let $\mathbf{Q_1}=P_2$, and $\mathbf{Q_n}=\mathbf{Q_{n-1}}\square P_2$
for $n\geq2$. The graph $\mathbf{Q_n}$ is called the hypercube graph.




\begin{lemma}(Lemma 2.8 of \cite{ABW}) \label{le:o-1} If
$K_n$ has an orientation with $S(G^\sigma)^{T}S(G^\sigma)=(n-1)I_n$,
then n is a multiple of 4.
\end{lemma}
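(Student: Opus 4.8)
The plan is to read the hypothesis as the statement that $S(G^\sigma)$ is a skew-symmetric conference matrix and then run the classical $2$-adic valuation argument. Set $S = S(G^\sigma)$. Because $K_n$ is complete, $S$ is a real skew-symmetric matrix with zero diagonal all of whose off-diagonal entries lie in $\{\pm 1\}$, and the hypothesis $S^T S = (n-1)I_n$ says $-S^2 = (n-1)I_n$, i.e. $S^2 = -(n-1)I_n$.

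First I would settle the parity of $n$: a skew-symmetric matrix of odd order is singular, while $S^2 = -(n-1)I_n$ is invertible for $n\ge 2$, so $n$ is even, say $n = 2m$ with $m\ge 2$ (the statement is understood for $n\ge 3$, which is automatic here since a $5$-regular complete graph has $n\ge 6$; note $K_2$ with either orientation does satisfy $S^TS=I_2$). Next comes the eigenvalue bookkeeping: if $\lambda$ is any complex eigenvalue of $S$ then $S^2=-(n-1)I_n$ forces $\lambda^2=-(n-1)$, so $\lambda=\pm i\sqrt{n-1}$, and since $S$ is real these two conjugate values occur with equal algebraic multiplicity, hence each with multiplicity $m$. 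Therefore
\[
\det(I_n+S) = \bigl(1+i\sqrt{n-1}\bigr)^{m}\bigl(1-i\sqrt{n-1}\bigr)^{m} = \bigl(1+(n-1)\bigr)^{m} = n^{m}.
\]

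The heart of the argument is to estimate this same determinant $2$-adically. Reducing entrywise mod $2$ gives $I_n+S\equiv J\pmod 2$, where $J$ is the all-ones matrix, so every column of $I_n+S$ is congruent mod $2$ to the all-ones vector. Subtracting the first column from each of the remaining $n-1$ columns does not change the determinant and makes those $n-1$ columns have all entries even; pulling a factor $2$ out of each shows $2^{\,n-1}\mid \det(I_n+S)=n^{m}$. Comparing $2$-adic valuations, $v_2(n^{m}) = m\bigl(1+v_2(m)\bigr)\ge n-1 = 2m-1$, hence $m\,v_2(m)\ge m-1$, so $v_2(m)\ge 1-\tfrac1m>0$ for $m\ge 2$; thus $m$ is even and $n=2m\equiv 0\pmod 4$.

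I expect the only step that is not automatic to be the idea of computing $\det(I_n+S)$ in two ways — exactly, via the eigenvalue structure, and only up to divisibility, via the mod-$2$ reduction together with a column operation — and then playing the two against each other; once that is in place the conclusion is a one-line valuation count. An alternative I would keep in reserve is the classical conference-matrix derivation, but the determinant argument above is shorter and avoids case analysis.
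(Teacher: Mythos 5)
Your proof is correct. Note that the paper itself never proves this statement: it is imported verbatim as Lemma 2.8 of \cite{ABW}, so the only meaningful comparison is with the classical argument that \cite{ABW} relies on, namely: since $S$ is skew-symmetric with zero diagonal and off-diagonal entries $\pm 1$, the matrix $H=I_n+S$ has all entries $\pm 1$ and satisfies $HH^{T}=I_n+S+S^{T}+SS^{T}=I_n+(n-1)I_n=nI_n$, so $H$ is a (skew-)Hadamard matrix, and one then quotes the standard fact that a Hadamard matrix of order $n>2$ has order divisible by $4$ (proved by the three-row counting argument). Your route is genuinely different and self-contained: you compute $\det(I_n+S)$ exactly as $n^{n/2}$ from the eigenvalues $\pm i\sqrt{n-1}$ (with the parity of $n$ settled first by the singularity of odd-order skew-symmetric matrices), extract the divisibility $2^{\,n-1}\mid\det(I_n+S)$ from the mod-$2$ column reduction, and compare $2$-adic valuations to force $4\mid n$; all steps check out, including the valuation inequality $m\,v_2(m)\ge m-1$ which rules out $v_2(m)=0$ for $m\ge 2$. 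What your version buys is independence from the Hadamard order theorem, at the modest cost of the eigenvalue and determinant bookkeeping; your explicit exclusion of the degenerate case $n=2$ (where $S^{T}S=I_2$ does hold) is appropriate, since the lemma is applied in the paper only to $K_6$.
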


\begin{lemma}(Theorem 3.5 of \cite{CH}) \label{le:o-2} Let
$G^\sigma$ be an oriented k-regular graph of $G$ on n vertices with
maximum skew energy $\varepsilon_s(G^\sigma)=n\sqrt k$. Then the
oriented graph $(P_{2}\Box G)^o$ of $P_{2}\Box G$ has the maximum
skew energy $\varepsilon_s(G^\sigma)=2n\sqrt{k + 1}$.
\end{lemma}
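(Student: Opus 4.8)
The plan is to prove the statement by exhibiting an explicit orientation $o$ of $P_2 \Box G$ whose skew-adjacency matrix $S$ satisfies $S^T S = (k+1) I_{2n}$. Since $P_2 \Box G$ is $(k+1)$-regular on $2n$ vertices, Lemma~\ref{le:c-1} then gives at once that $\varepsilon_s((P_2 \Box G)^o) = 2n\sqrt{k+1}$ and that this is optimal.

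First I would unpack the hypothesis. By Lemma~\ref{le:c-1} applied to $G^\sigma$, the equality $\varepsilon_s(G^\sigma) = n\sqrt k$ is equivalent to $S(G^\sigma)^T S(G^\sigma) = k I_n$; since $S(G^\sigma)$ is skew-symmetric this says $S(G^\sigma)^2 = -k I_n$, which is the only property of $G^\sigma$ I will use. Next, realize $P_2 \Box G$ as two disjoint copies $G_1, G_2$ of $G$ joined by a perfect matching between corresponding vertices. Orient $G_1$ by $\sigma$, orient $G_2$ by the reverse orientation $\bar\sigma$ (so its skew-adjacency matrix is $-S(G^\sigma)$), and orient every matching edge from its endpoint in $G_1$ toward its endpoint in $G_2$. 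Ordering the vertices copy-by-copy, the resulting skew-adjacency matrix is
$$ S \;=\; \begin{pmatrix} S(G^\sigma) & I_n \\ -I_n & -S(G^\sigma) \end{pmatrix}, $$
which one checks is skew-symmetric and is indeed the skew-adjacency matrix of an orientation of $P_2 \Box G$ (each off-diagonal $\pm 1$ sits exactly at a matching edge, and the diagonal blocks are honest skew-adjacency matrices of $G$).

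The core computation is then a routine block multiplication: using $S(G^\sigma)^T = -S(G^\sigma)$ and $S(G^\sigma)^2 = -kI_n$, the diagonal blocks of $S^T S$ come out to $-S(G^\sigma)^2 + I_n = (k+1)I_n$, while the off-diagonal blocks come out to $\pm\bigl(S(G^\sigma) - S(G^\sigma)\bigr) = 0$, so $S^T S = (k+1)I_{2n}$. Invoking Lemma~\ref{le:c-1} for the $(k+1)$-regular graph $P_2 \Box G$ on $2n$ vertices completes the argument.

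I do not expect a genuine obstacle here: the computation is elementary. The only place requiring a moment's thought is the choice of orientation — one must reverse $\sigma$ on the second copy (equivalently, put $-S(G^\sigma)$ in the bottom-right block) precisely so that the cross terms in $S^T S$ cancel; with $+S(G^\sigma)$ there instead they would not. It is also worth verifying in passing that the block matrix above genuinely corresponds to an orientation of $P_2 \Box G$, which it does for the reasons noted.
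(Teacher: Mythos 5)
Your proof is correct: the block matrix $S=\begin{pmatrix} S(G^\sigma) & I_n\\ -I_n & -S(G^\sigma)\end{pmatrix}$ is indeed the skew-adjacency matrix of an orientation of $P_2\Box G$, the computation $S^TS=-S^2=(k+1)I_{2n}$ goes through exactly as you say (the sign flip on the second copy is precisely what kills the off-diagonal blocks), and Lemma~\ref{le:c-1} applied to the $(k+1)$-regular graph $P_2\Box G$ on $2n$ vertices then yields $\varepsilon_s\bigl((P_2\Box G)^o\bigr)=2n\sqrt{k+1}$. Note, however, that the paper itself gives no proof of this statement: it is quoted verbatim as Theorem 3.5 of \cite{CH} and used as a black box. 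So your argument is not competing with an in-paper proof; it is a self-contained justification of the cited result. It also coincides in spirit with how the result is obtained in \cite{CH}, where the authors study skew spectra of Cartesian products and in effect use the same ``two copies, one with the reversed orientation, matching edges oriented coherently'' construction; their route goes through the skew spectrum of the product orientation, whereas your direct verification of $S^TS=(k+1)I_{2n}$ is more elementary and is all that is needed for the optimality criterion of Lemma~\ref{le:c-1}.
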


  Let $S(G^\sigma)=[s_{ij}]_{n\times n}$ be the skew adjacency matrix of $G^\sigma$
and let $W=u_1u_2 \ldots u_k$ (perhaps $u_i=u_j$ for $i\neq j$) be a walk in $G^\sigma$
joining $u_1$ and $u_k$. The sign of $W$, denoted by $sgn(W)$, is
defined as
$$sgn(W)=\prod_{i=1}^{k-1}{s_{u_iu_{i+1}}}.$$
Denote by
$w_{uv}^{+}(k)$ and $w_{uv}^{-}(k)$ the number of all positive walks
and negative walks starting $u$ ending $v$ with the length $k$,
respectively.

\begin{lemma}(Lemma 3.7 of \cite{GX}, Proposition 2.4 of \cite{CLL}) \label{le:o-3} Let
$G^\sigma$ be a $k$-regular oriented graph on $n$ vertices with
skew adjacency matrix $S=S(G^\sigma)$. then $S^TS=kI_n$ if and only
if for any two distinct vertices $u$ and $v$ of $G^\sigma$,
$$w_{uv}^{+}(2)= w_{uv}^{-}(2).$$
\end{lemma}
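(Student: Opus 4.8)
The plan is to reduce the statement to a direct entrywise computation of $S^2$, exploiting that $S = S(G^\sigma)$ is skew-symmetric. First I would record the identity $S^T = -S$, so that $S^TS = -S^2$; hence $S^TS = kI_n$ is equivalent to $S^2 = -kI_n$, and it suffices to analyze the entries of $S^2$.

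Next I would compute the diagonal entries. For any vertex $u$,
$$(S^2)_{uu} = \sum_{w} s_{uw}s_{wu} = -\sum_{w} s_{uw}^2 = -d(u) = -k,$$
where the middle equality uses $s_{wu} = -s_{uw}$, the fact that $s_{uw}^2 \in \{0,1\}$ with $s_{uw}^2 = 1$ exactly when $w \in N(u)$, and $k$-regularity in the last step. Thus the diagonal of $S^2$ already agrees with that of $-kI_n$ with no hypothesis needed; the condition $S^2 = -kI_n$ is therefore equivalent to the vanishing of all off-diagonal entries of $S^2$.

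Then I would identify the off-diagonal entries with the signed walk count. For distinct $u,v$,
$$(S^2)_{uv} = \sum_{w} s_{uw}s_{wv} = \sum_{W:\, u\,w\,v} sgn(W) = w_{uv}^{+}(2) - w_{uv}^{-}(2),$$
since each length-$2$ walk $W = u\,w\,v$ contributes $s_{uw}s_{wv} = sgn(W) = \pm 1$, while $w$ outside $N(u)\cap N(v)$ contributes $0$. Chaining the three observations gives: $S^TS = kI_n \iff S^2 = -kI_n \iff (S^2)_{uv} = 0$ for all $u \neq v \iff w_{uv}^{+}(2) = w_{uv}^{-}(2)$ for all distinct $u,v$, which is exactly the claim.

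The argument is essentially a one-line matrix calculation, so there is no genuine obstacle here; the only point that warrants care is the sign bookkeeping — verifying that $s_{uw}s_{wv}$ really coincides with $sgn(W)$ for the walk $W = u\,w\,v$ under the sign convention fixed above, and that the diagonal entries $(S^2)_{uu}$ come out to $-k$ automatically from $k$-regularity (via skew-symmetry) rather than needing the hypothesis $S^TS = kI_n$.
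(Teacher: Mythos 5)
Your argument is correct: using $S^T=-S$ to reduce to $S^2=-kI_n$, observing that the diagonal entries equal $-k$ automatically by $k$-regularity, and identifying $(S^2)_{uv}=w_{uv}^{+}(2)-w_{uv}^{-}(2)$ for $u\neq v$ is exactly the standard proof of this lemma, which the paper itself does not reprove but simply cites from \cite{GX} and \cite{CLL}, where the same entrywise computation is used. Nothing is missing; the sign bookkeeping you flag is handled correctly.
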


  In \cite{GX}, Gong and Xu presented the following method to construct
a new oriented graph with the optimum skew energy.

  Let $v$ be an arbitrary vertex of the oriented graph $G^\sigma$.
The operation by reversing the orientations of all arcs incident
with $v$ and preserving the orientations of all its other arcs is
called \emph{a reversal of $G^\sigma$ at $v$}. Denote by
$S(G^\sigma)$ and $S(G^\beta)$ the skew adjacency matrices of the
oriented graph $G^\sigma$ and $G^\beta$, respectively. Let $G^\beta$
be a reversal of $G^\sigma$ at $v$, then $S(G^\sigma)=
PS(G^\beta)P^{-1}$. Hence $$\varepsilon_s(G^\sigma)=\varepsilon_s(G^\beta),$$
where $P$ is the diagonal matrix obtained from the identity matrix $I_n$ by replacing
the diagonal entry corresponding to vertices $v$ by $-1$.

  We will determine all connected 5-regular optimum skew energy oriented graphs
as follow.

\begin{figure}[htbp]
\begin{centering}
\includegraphics[scale=0.6]{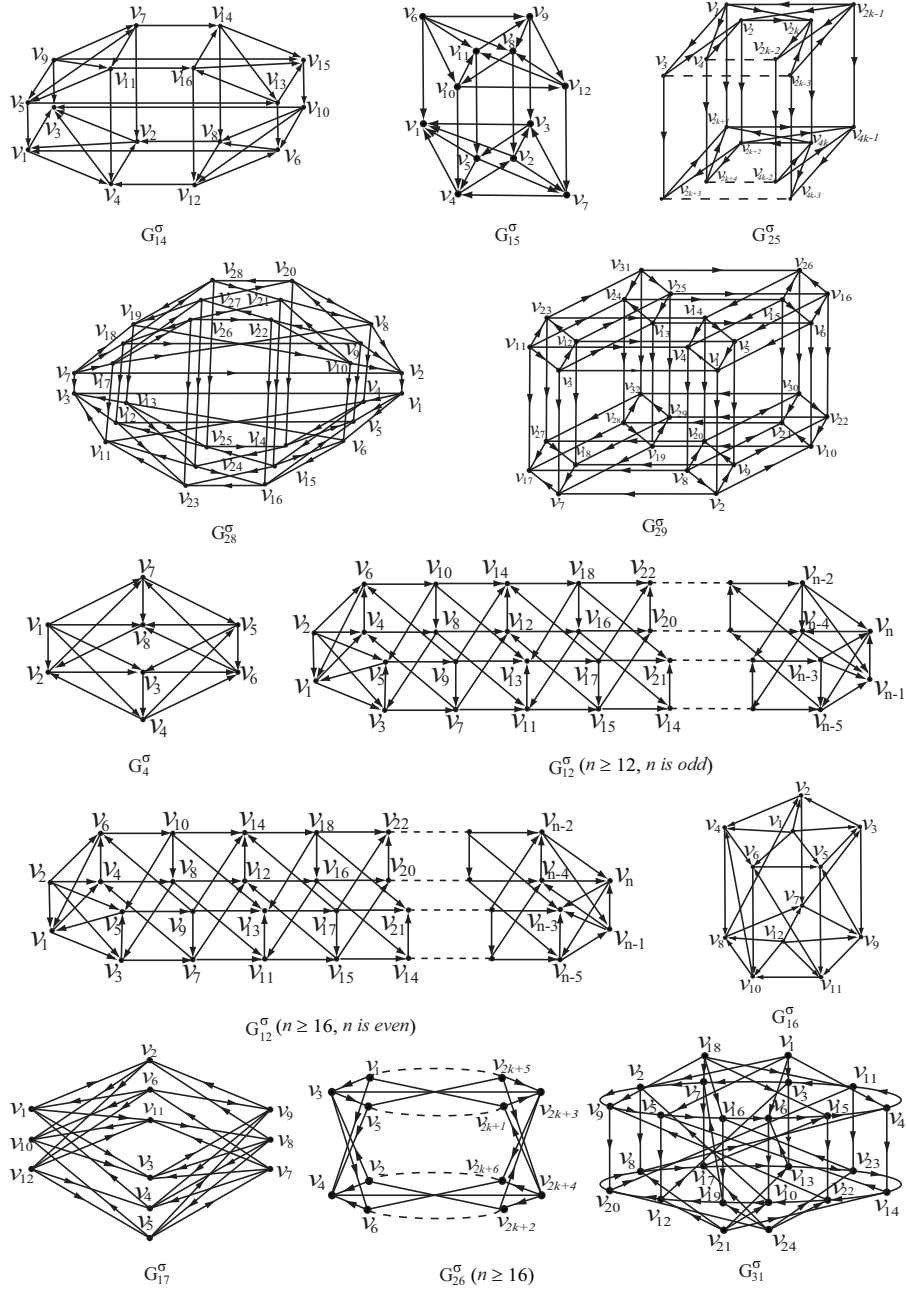}\\
\caption{Eleven classes of connected 5-regular optimum skew energy oriented
graphs.}\label{Fig-4}
\end{centering}
\end{figure}

\begin{theorem} \label{th:o-1} Let $G^\sigma$ be a connected 5-regular
optimum skew energy oriented
graph of order $n$. Then $G^\sigma$ is one of graphs with even neighborhood property
depicted in Figure \ref{Fig-4}.
\end{theorem}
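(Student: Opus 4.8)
The plan is to work through each of the fourteen classes of candidate underlying graphs catalogued at the end of Section~\ref{Underly-sec} and decide, for each, whether it admits an orientation $\sigma$ with $S(G^\sigma)^{T}S(G^\sigma)=5I_n$. By Lemma~\ref{le:c-1} this is exactly the condition for optimum skew energy, and by Lemma~\ref{le:c-5} it suffices to treat connected graphs. The workhorse for \emph{verifying} that a proposed orientation works will be Lemma~\ref{le:o-3}: instead of multiplying matrices, I only need to check that for every pair of distinct vertices $u,v$ the numbers of positive and negative length-$2$ walks between them agree, i.e.\ $w^{+}_{uv}(2)=w^{-}_{uv}(2)$. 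Since the underlying graphs have bounded common-neighbourhood sizes ($|N(u)\cap N(v)|\in\{0,2,4\}$ by Lemma~\ref{le:c-4}), this is a finite, local check at each pair, and for the infinite families ($G_3,G_6,G_{12},G_{13}$, the $U_n$-type families, etc.) it reduces to checking one ``fundamental block'' and the gluing between consecutive blocks.

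First I would dispose of the positive cases using the structural tools already assembled. The graph $G_1\cong K_6$ is killed immediately by Lemma~\ref{le:o-1}: since $6$ is not a multiple of $4$, $K_6$ has no orientation with $S^{T}S=5I_6$, so class~(I) contributes nothing. For the graphs that are Cartesian products with $P_2$ --- e.g.\ $\mathbf{Q_5}=P_2\Box\mathbf{Q_4}$, and $P_2\Box K_4$-type pieces --- Lemma~\ref{le:o-2} lifts a known optimum $4$-regular orientation to an optimum $5$-regular one, so these survive and I just record the lifted orientation. For the remaining surviving classes I would exhibit an explicit orientation on the fundamental block, drawn in Figure~\ref{Fig-4}, and verify $w^{+}_{uv}(2)=w^{-}_{uv}(2)$ pair-by-pair; the reversal operation (multiplying a row/column of $S$ by $-1$) is available to normalise these orientations and to argue that the choice is essentially unique up to switching, which is what lets me claim ``$G^\sigma$ is one of'' rather than merely ``there exists.''

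The bulk of the work --- and the main obstacle --- is the \emph{negative} direction: showing that the three classes omitted from Figure~\ref{Fig-4} (the three of the fourteen that do not appear) admit \emph{no} optimum orientation. Here the even-neighbourhood property is necessary but not sufficient, so a genuine obstruction must be produced for each. The strategy is a parity/counting argument via Lemma~\ref{le:o-3}: fix any orientation, pick a well-chosen short cycle or small subconfiguration $C$ in $G$, and multiply the constraints $w^{+}_{uv}(2)=w^{-}_{uv}(2)$ around $C$; the product of the relevant signs $s_{ij}$ telescopes to a fixed value ($\pm1$) determined by the combinatorics of $C$, and for the forbidden graphs this value is forced to be $-1$, contradicting the requirement that it be a product of squares equal to $+1$. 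Concretely, each length-$2$ walk condition says that the two $2$-paths through a common neighbour have opposite sign contributions when there are exactly two common neighbours; chaining these around a suitable closed structure (a $4$-cycle of common-neighbour relations, or a $K_{3,3}$- or $K_4$-minus-an-edge pattern) yields the contradiction. The delicate part is choosing, for each of the three bad graphs, the \emph{right} closed configuration so that the sign product is unambiguously determined; this is case-specific and is where the argument is least automatic.

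Finally, to tie everything together I would state that combining the positive constructions with the three impossibility arguments leaves precisely the eleven classes depicted in Figure~\ref{Fig-4}, and invoke Lemma~\ref{le:c-5} once more to pass from connected components back to arbitrary $5$-regular oriented graphs, completing the proof of Theorem~\ref{th:o-1}.
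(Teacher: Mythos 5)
Your overall architecture matches the paper's: Lemma~\ref{le:o-1} kills $G_1\cong K_6$, Lemma~\ref{le:o-2} lifts known optimum $4$-regular orientations to the $P_2\Box\cdot$ graphs ($G_{14},G_{15},G_{18},G_{23},G_{25},G_{28},G_{29}$), explicit orientations are exhibited and verified for the remaining sporadic graphs and block-structured families ($G_4,G_{12},G_{16},G_{17},G_{26},G_{31}$), and non-existence is proved by the length-$2$ walk criterion of Lemma~\ref{le:o-3} after normalising with reversals. Checking $w^{+}_{uv}(2)=w^{-}_{uv}(2)$ pairwise instead of multiplying block matrices is an acceptable, equivalent verification.

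However, there is a genuine gap in your negative direction: you assume the dichotomy is decided \emph{class by class}, i.e.\ that exactly the three classes omitted from Figure~\ref{Fig-4} fail and that every member of the eleven surviving classes can be oriented, with the infinite families handled by a ``fundamental block plus gluing'' check. That is false, and the paper's proof spends substantial effort on precisely this point: within class~(\uppercase\expandafter{\romannumeral 3}) the family $G_6$ ($n\ge 8$) admits an optimum orientation \emph{only} for $n=8$, where it is isomorphic to $G_4$, and within class~(\uppercase\expandafter{\romannumeral 8}) the family $G_{20}$ ($n\ge 12$) admits one \emph{only} for $n=12$, where it is $G_{17}$. These are proved by the same forced-orientation walk-counting arguments as for $G_3$ and $G_{27}$ (e.g.\ for $G_{20}$ with $n>12$ one is driven to a pair with $w^{+}_{v_7v_8}(2)=1$, $w^{-}_{v_7v_8}(2)=3$), and without them the theorem's ``one of the graphs in Figure~\ref{Fig-4}'' conclusion does not follow. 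Your plan not only omits these within-class non-existence proofs but would actively go wrong there: you list $G_3$ and $G_6$ among the infinite families to be \emph{verified} blockwise, even though $G_3$ lies in a class you later declare excluded and $G_6$ fails for all $n>8$, so the proposed gluing verification cannot succeed for them. Beyond the three whole-class obstructions, you need separate impossibility arguments for all larger members of these families (and, strictly, for the other unoriented members of the surviving classes), which is where most of the paper's case analysis lives.
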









\begin{proof}In the following, we will prove three classes of graphs
(\uppercase\expandafter{\romannumeral 1}),
(\uppercase\expandafter{\romannumeral 2}) and
(\uppercase\expandafter{\romannumeral 11}) (see the end of Section 2) have no optimum
orientations, and the other classes of graphs have optimum
orientations.

Each graph in the classes (\uppercase\expandafter{\romannumeral 5}),
(\uppercase\expandafter{\romannumeral 6}),
(\uppercase\expandafter{\romannumeral 9}),
(\uppercase\expandafter{\romannumeral 12}), and
(\uppercase\expandafter{\romannumeral 13}) has an optimum orientation.

For $G_{14}$, $G_{15}$, $G_{18}$, $G_{23}$, $G_{25}$, $G_{28}$ and $G_{29}$,
from \cite{CLL} and \cite{GXZ}, we know $U_n,(n\geq3)$, $P_2\Box K_4$, $Q_4$ and $G_3^\sigma$ (Fig. 3.4 in \cite{CLL}) have optimum orientations, respectively.
By Lemma \ref{le:o-2}, we obtain $G_{14}\cong P_2\Box(P_2\Box K_4)$, $G_{15}\cong P_2\Box U_3 $,
$G_{18}\cong P_2\Box U_4$, $G_{23}\cong P_2\Box U_5$, $G_{25}\cong P_2\Box U_n$,
$G_{28}\cong P_2\Box G_3$ ($G_3$ of Fig. 3.4. in \cite{CLL}), and
$G_{29}\cong P_2\Box Q_4$ have optimum orientations, respectively.

Only graph $G_4$ of class (\uppercase\expandafter{\romannumeral 3}) has an
optimum orientation.

For the graph $G_6$ with order $n\geq8$. we will prove that $G_6$ has an optimum
orientation only when $n=8$.

  Let \{$(v_1, v_2), (v_1, v_{n-1}), (v_1, v_n), (v_1,
v_3), (v_1, v_4)\}\subset E(G_6^\sigma)$. (We give a reversal of $G_6^\sigma$ at
$v_2, v_3, v_4, v_{n-1}$ or $v_n$ if necessary).

   Since $|V|=n\geq8$, the orientations of edges which are adjacent to $v_6$
are not determined, we can assume that $(v_4, v_6)\in E(G_6^\sigma)$,
$(v_3, v_5)\in E(G_6^\sigma)$.
(We give a reversal at $v_6$ or $v_5$ if necessary.) By Lemma \ref{le:o-3},
$w_{v_1v_6}^{+}(2)= w_{v_1v_6}^{-}(2)$ and
$w_{v_1v_5}^{+}(2)= w_{v_1v_5}^{-}(2)$, thus $(v_6, v_3)\in E(G_6^\sigma)$
and $(v_5, v_4)\in E(G_6^\sigma)$, respectively. Next we will discuss the
following two cases.

\textit{Case 1.} $(v_2, v_4)\in E(G_6^\sigma)$.

  By Lemma \ref{le:o-3}, we have $w_{v_1v_4}^{+}(2)= w_{v_1v_4}^{-}(2)$, thus
$(v_4, v_3)\in E(G_6^\sigma)$. Similarly,
$w_{v_1v_3}^{+}(2)= w_{v_1v_3}^{-}(2)$, thus $(v_3, v_2)\in E(G_6^\sigma)$.
Then we find that $w_{v_2v_6}^{+}(2)=2$, so there must be two
negative paths adjoining $v_2$ and $v_6$. This means $v_6$ must be
adjacent to $v_{n-1}$ and $v_n$. So there must be eight vertices in
$G_6^\sigma$.

\textit{Case 2.} $(v_4, v_2)\in E(G_6^\sigma)$.

By Lemma \ref{le:o-3}, we have $w_{v_1v_4}^{+}(2)=w_{v_1v_4}^{-}(2)$, thus
$(v_3, v_4)\in E(G_6^\sigma)$. Similarly,
$w_{v_1v_3}^{+}(2)= w_{v_1v_3}^{-}(2)$, thus $(v_2, v_3)\in E(G_6^\sigma)$.
Then we find that $w_{v_2v_6}^{-}(2)=2$, so there must be two
positive paths adjoining $v_2$ and $v_6$. This means $v_6$ must be
adjacent to $v_{n-1}$ and $v_n$. Hence there are eight vertices
in the graph, too.

   Therefore there are eight vertices in $G_6^\sigma$.
In fact, it is the graph which is isomorphic to $G_4^\sigma$.

   The skew adjacency matrix of $G_4^\sigma$ is the following

\begin{equation*}
S(G_4^\sigma)=
\left[\begin{array}{rrrrrrrr}
0& 1& 1& 1& 0& 0& 1& 1\\
-1& 0& 1& -1& 0& 0& 1& -1\\
-1& -1& 0& 1& -1& 1& 0& 0\\
-1& 1& -1& 0& 1& 1& 0& 0\\
0& 0& 1& -1& 0& 1& -1& 1\\
0& 0& -1& -1& -1& 0& 1& 1\\
-1& -1& 0& 0& 1& -1& 0& 1\\
-1& 1& 0& 0& -1& -1& -1& 0
\end{array}\right].
\end{equation*}

 It is easy to verify $S(G_4^\sigma)^TS(G_4^\sigma)=5I_n$. Then only graph $G_4$ of
class (\uppercase\expandafter{\romannumeral 3}) has an optimum orientation.

 We now show that each graph of class (\uppercase\expandafter{\romannumeral 4})
has an optimum orientation.

 For $G_{12}$ of order $n\geq 12$. We know $n$ must be multiple of 4.
While the parity of $n/4$ is not determined, we will give two kinds of
skew adjacency matrices of $G_{12}^\sigma$ according to the parity of
$n/4$. Here let
\begin{equation*}
D_1= \left[\begin{array}{rr}
0& -1\\
1& 0
\end{array}\right],
D_2= \left[\begin{array}{rrrr}
0& 0& 1& 0\\
0& 0& 0& 1\\
-1& 0& 0& 0\\
0& -1& 0& 0
\end{array}\right],
Q_1= \left[\begin{array}{rrrr}
1& 1& -1& -1\\
1& 1& 1& 1
\end{array}\right],
\end{equation*}
\begin{equation*}
Q_2= \left[\begin{array}{rrrr}
1& -1& 0& 0\\
-1& 1& 0& 0\\
0& 0& 1& -1\\
0& 0& -1& 1
\end{array}\right],
Q_3= \left[\begin{array}{rrrr}
1& 1& 0& 0\\
1& 1& 0& 0\\
0& 0& 1& 1\\
0& 0& 1& 1
\end{array}\right],
Q_4= \left[\begin{array}{rr}
1& -1\\
-1& 1\\
-1& -1\\
1& 1
\end{array}\right],
Q_5= \left[\begin{array}{rr}
1& -1\\
1& -1\\
1& 1\\
1& 1
\end{array}\right].
\end{equation*}

   (1) $n/4$ is odd and $n\geq12$.

Let the rows of $S(G_{12}^\sigma)$ correspond successively the vertices
$v_1$, $v_2$, \ldots, $v_n$.

\begin{equation*}
S(G_{12}^\sigma)\begin{tiny}= \left[\begin{matrix}
D_1& Q_1\\
-Q_1^T& D_2& Q_2\\
& -Q_2^T& -D_2& Q_3\\
& & -Q_3^T& D_2& Q_2\\
& & & -Q_2^T& -D_2& Q_3\\
& & & & \ddots& \ddots& \ddots\\
& & & & & -Q_3^T& D_2& Q_2\\
& & & & & & -Q_2^T& -D_2& Q_5\\
& & & & & & & -Q_5^T& D_1^T\\
\end{matrix}\right].
\end{tiny}
\end{equation*}

   Next, we will prove $S(G_{12}^\sigma)^TS(G_{12}^\sigma)=5I_n$.

\begin{equation*}
\begin{aligned}
-S[G_{12}^\sigma]^TS[G_{12}^\sigma]=-S[G_{12}^\sigma]^2=
\begin{tiny}\left[\begin{matrix}
A& G& K\\
P& B& G& L\\
K^T& Q& C& I& M\\
& L^T& R& D& H& L\\
& & M^T& Q& C& I& M\\
& & \ddots& \ddots& \ddots& \ddots& \ddots\\
& & & L^T& R& D& G& O\\
& & & & M^T& Q& E& J\\
& & & & & O^T& U& F
\end{matrix}\right],
\end{tiny}
\end{aligned}
\end{equation*}
where $A={D_1}^2-Q_1{Q_1}^T$,
$B=-Q_1^TQ_1+{D_2}^2-Q_2{Q_2}^T$
$C=-Q_2^TQ_2+{D_2}^2-Q_3{Q_3}^T$,
$D=-Q_3^TQ_3+{D_2}^2-Q_2{Q_2}^T$,
$E=-Q_2^TQ_2+{D_2}^2-Q_5Q_5^T$,
$F=-Q_5^TQ_5+{D_1^T}^2$,
$G=D_1Q_1+Q_1D_2$,
$H=D_2Q_2-Q_2D_2$,
$I=-D_2Q_3+Q_3D_2$,
$J=-D_2Q_5+Q_5{D_1}^T$,
$K=Q_1Q_2$,
$L=Q_2Q_3$,
$M=Q_3Q_2$,
$O=Q_2Q_5$,
$P=-Q_1^TD_1-D_2Q_1^T$,
$Q=-Q_2^TD_2+D_2Q_2^T$,
$R=Q_3^TD_2-D_2Q_3^T$,
$U=Q_5^TD_2-D_1^TQ_5^T$.

   We obtain $A=D_1^2-Q_1Q_1^T=-5I_2$, $B=-Q_1^TQ_1+{D_2}^2-Q_2Q_2^T=-5I_4$,
$C=-Q_2^TQ_2+{D_2}^2-Q_3{Q_3}^T=-5I_4$,
$D=-Q_3^TQ_3+{D_2}^2-Q_2Q_2^T=-5I_4$,
$E=-Q_2^TQ_2+{D_2}^2-Q_5Q_5^T=-5I_4$,
$F=-Q_5^TQ_5+{D_1^T}^2=-5I_2$, $G=D_1Q_1+Q_1D_2=0$, $H=D_2Q_2-Q_2D_2=0$,
$I=-D_2Q_3+Q_3D_2=0$, $J=-D_2Q_5+Q_5D_1^T=0$,
$K=Q_1Q_2=0$, $L=Q_2Q_3=0$, $M=Q_3Q_2=0$, $O=Q_2Q_5=0$.

Since $S(G_{12}^\sigma)$ is an antisymmetric matrix, $-(S(G_{12}^\sigma))^2$ is a
symmetric matrix, then
$$S(G_{12}^\sigma)^{T}S(G_{12}^\sigma)=-(S(G_{12}^\sigma))^2=5I_n.$$

  (2) $n/4$ is even and $n\geq16$.

  Let the skew adjacency matrix of graph $G_{12}^\beta$ be $S(G_{12}^\beta)$,
and the rows of $S(G_{12}^\beta)$ correspond successively the
vertices $v_1$, $v_2$, \ldots, $v_n$.

\begin{tiny}
\begin{equation*}
S(G_{12}^\beta)= \left[\begin{matrix}
D_1& Q_1\\
-Q_1^T& D_2& Q_2\\
& -Q_2^T& -D_2& Q_3\\
& & -Q_3^T& D_2& Q_2\\
& & & -Q_2^T& -D_2& Q_3\\
& & & & \ddots& \ddots& \ddots\\
& & & & & -Q_2^T& -D_2& Q_3\\
& & & & & & -Q_3^T& D_2& Q_4\\
& & & & & & & -Q_4^T& D_1^T\\
\end{matrix}\right].
\end{equation*}
\end{tiny}

  Similarly we can prove $S(G_{12}^\beta)^TS(G_{12}^\beta)= 5I_n$.

By Lemma \ref{le:c-1}, we know that each graph in class
(\uppercase\expandafter{\romannumeral 4}) has an optimum orientation.

We will prove each graph in class
(\uppercase\expandafter{\romannumeral 7}) has an optimum orientation.

For $G_{16}$, let the rows of $S(G_{16}^\sigma )$ correspond
successively the vertices $v_3,v_2,v_4,v_6,v_5,v_1,v_9,$ $v_7,v_8,v_{10},v_{11},v_{12}$. It follows that,
\begin{equation*}
S(G_{16}^\sigma)=\left[\begin{array}{rrrrrrrrrrrr}
0& 1& 0& 0& -1& -1& 1& -1& 0& 0& 0& 0\\
-1& 0& 1& 0& 0& -1& 0& 1& -1& 0& 0& 0\\
0& -1& 0& 1& 0& -1& 0& 0& 1& -1& 0& 0\\
0& 0& -1& 0& 1& -1& 0& 0& 0& 1& -1& 0\\
1& 0& 0& -1& 0& -1& -1& 0& 0& 0& 1& 0\\
1& 1& 1& 1& 1& 0& 0& 0& 0& 0& 0& 0\\
-1& 0& 0& 0& 1& 0& 0& -1& 0& 0& 1& -1\\
1& -1& 0& 0& 0& 0& 1& 0& -1& 0& 0& -1\\
0& 1& -1& 0& 0& 0& 0& 1& 0& -1& 0& -1\\
0& 0& 1& -1& 0& 0& 0& 0& 1& 0& -1& -1\\
0& 0& 0& 1& -1& 0& -1& 0& 0& 1& 0& -1\\
0& 0& 0& 0& 0& 0& 1& 1& 1& 1& 1& 0
\end{array}\right].
\end{equation*}

 It is easy to check that $S(G_{16}^\sigma)^TS(G_{16}^\sigma)=5I_{12}$.
Then each graph in class (\uppercase\expandafter{\romannumeral 7})
has an optimum orientation.

For the graphs of class (\uppercase\expandafter{\romannumeral 8}),
such as $G_{20}$ of order $n\geq12$, we will prove that only when
$n=12$ (it is $G_{17}$), $G_{20}$ has an optimum orientation.
Next we give the skew adjacency matrix of $G_{17}^\sigma$ with rows corresponding to
$v_1,v_{10},v_{12},v_3,v_4,v_5,v_9,v_8,v_7,v_2,v_6,v_{11}$.

\begin{equation*}
S(G_{17}^\sigma)= \left[\begin{array}{rrrrrrrrrrrr}
0& 0& 0& 1& -1& 0& 0& 0& 0& 1& -1& 1\\
0& 0& 0& 1& 0& -1& 0& 0& 0& -1& -1& -1\\
0& 0& 0& 0& 1& 1& 0& 0& 0& -1& -1& 1\\
-1& -1& 0& 0& 0& 0& 1& -1& -1& 0& 0& 0\\
1& 0& -1& 0& 0& 0& -1& -1& -1& 0& 0& 0\\
0& 1& -1& 0& 0& 0& 1& -1& 1& 0& 0& 0\\
0& 0& 0& -1& 1& -1& 0& 0& 0& 1& -1& 0\\
0& 0& 0& 1& 1& 1& 0& 0& 0& 1& 0& -1\\
0& 0& 0& 1& 1& -1& 0& 0& 0& 0& 1& 1\\
-1& 1& 1& 0& 0& 0& -1& -1& 0& 0& 0& 0\\
1& 1& 1& 0& 0& 0& 1& 0& -1& 0& 0& 0\\
-1& 1& -1& 0& 0& 0& 0& 1& -1& 0& 0& 0
\end{array}\right].
\end{equation*}

   It is easy to verify that $S(G_{17}^\sigma)^TS(G_{17}^\sigma)=5I_{12}$.

   For the graph $G_{20}$ with order $n >12$, from the structure of $G_{20}$
we know $n \geq18$.
Let $\{(v_1, v_4),(v_1,v_5),(v_1, v_{n-2}),(v_{n-1}, v_1),(v_n, v_1)\}\subset E(G^\sigma_{20})$,
otherwise we can give a reversal of $G^\sigma_{20}$ at $v_4, v_5, v_{n-2},v_{n-1}, v_n$.

   Let $(v_4, v_7)\in E(G_{20}^\sigma)$ (we give a reversal of $G_{20}^\sigma$
at $v_7$ if necessary).
$v_1v_4v_7$ and $v_1v_5v_7$ are two paths between $v_1$ and $v_7$ with length 2.
$v_1v_4v_7$ is positive, we have $v_1v_5v_7$ is negative by applying Lemma \ref{le:o-3},
thus $(v_7, v_5)\in E(G_{20}^\sigma)$.

   Similarly, let $(v_4, v_8)\in E(G_{20}^\sigma)$, $(v_4, v_9)\in E(G_{20}^\sigma)$
(we give a reversal of $G_{20}^\sigma$ at $v_8$ or $v_9$ if necessary).
By applying Lemma \ref{le:o-3}, $w_{v_1v_8}^{+}(2)=w_{v_1v_8}^{-}(2)$,
$w_{v_1v_9}^{+}(2)=w_{v_1v_9}^{-}(2)$,
thus $(v_7, v_5)\in E(G_{20}^\sigma)$, $(v_8, v_5)\in E(G_{20}^\sigma)$,
$(v_9, v_5)\in E(G_{20}^\sigma)$.

   Let $(v_4, v_2)\in E(G_{20}^\sigma)$
(we give a reversal of $G_{20}^\sigma$ at $v_2$ if necessary).
Let $(v_6, v_2)\in E(G_{20}^\sigma)$
(we give a reversal of $G_{20}^\sigma$ at $v_6$ if necessary).
By applying Lemma \ref{le:o-3}, $w_{v_2v_7}^{+}(2)=w_{v_2v_7}^{-}(2)$,
$w_{v_2v_8}^{+}(2)=w_{v_2v_8}^{-}(2)$,
$w_{v_2v_9}^{+}(2)=w_{v_2v_9}^{-}(2)$,
thus $(v_7, v_6)\in E(G_{20}^\sigma)$, $(v_8, v_6)\in E(G_{20}^\sigma)$,
$(v_9, v_6)\in E(G_{20}^\sigma)$.

   Let $(v_5, v_3)\in E(G_{20}^\sigma)$ (we give a reversal of $G_{20}^\sigma$ at $v_3$ if necessary).
   Since $w_{v_3v_7}^{+}(2)= w_{v_3v_7}^{-}(2)$, $(v_3, v_6)\in E(G_{20}^\sigma)$.

   $w_{v_4v_{n-2}}^{+}(2)= w_{v_4v_{n-2}}^{-}(2)$, $w_{v_4v_{n-1}}^{+}(2)=w_{v_4v_{n-1}}^{-}(2)$,
$w_{v_4v_{n}}^{+}(2)=w_{v_4v_{n}}^{-}(2)$, thus $(v_2, v_{n-2})\in E(G_{20}^\sigma)$,
$(v_{n-1}, v_2)\in E(G_{20}^\sigma)$, $(v_n, v_2)\in E(G_{20}^\sigma)$.

   $w_{v_6v_{n-2}}^{+}(2)=w_{v_6v_{n-2}}^{-}(2)$, $w_{v_6v_{n-1}}^{+}(2)=w_{v_6v_{n-1}}^{-}(2)$,
$w_{v_6v_{n}}^{+}(2)=w_{v_6v_{n}}^{-}(2)$, thus $(v_3, v_{n-2})\in E(G_{20}^\sigma)$,
$(v_{n-1}, v_3)\in E(G_{20}^\sigma)$, $(v_n, v_3)\in E(G_{20}^\sigma)$.

   Since $n >12$, we know that any edge which is adjacent to $v_{10}$
is not oriented,
so let $(v_7, v_{10})\in E(G_{20}^\sigma)$ (we give a reversal of $G_{20}^\sigma$
at $v_{10}$ if necessary).
Similarly let $(v_7, v_{11})\in E(G_{20}^\sigma)$ (we give a reversal of
$G_{20}^\sigma$ at $v_{11}$ if necessary).
By applying Lemma \ref{le:o-3}, $w_{v_4v_{10}}^{+}(2)=w_{v_4v_{10}}^{-}(2)$,
$w_{v_4v_{11}}^{+}(2)=w_{v_4v_{11}}^{-}(2)$,
thus $(v_{10}, v_8)\in E(G_{20}^\sigma)$, $(v_{11}, v_9)\in E(G_{20}^\sigma)$.

   Let $(v_9, v_{12})\in E(G_{20}^\sigma)$ (we give a reversal of
$G_{20}^\sigma$ at $v_{12}$ if necessary).
By applying Lemma \ref{le:o-3}, $w_{v_6v_{12}}^{+}(2)=w_{v_6v_{12}}^{-}(2)$,
thus $(v_{12}, v_8)\in E(G_{20}^\sigma)$.

  We find $w_{v_7v_8}^{+}(2)=1$, $w_{v_7v_8}^{-}(2)=3$,
this is a contradiction to Lemma \ref{le:o-3}.
So each graph of class (\uppercase\expandafter{\romannumeral 8})
have no optimum orientations for $n >12$ .


Each graph of class (\uppercase\expandafter{\romannumeral 10}) has an optimum orientation.

  For $G_{26}^\sigma$ of order $n \geq 16$, let $n=4k$
for convenience and the rows of $S(G_{26}^\sigma)$
correspond successively the vertices $v_1,v_2,\ldots,v_n$.
\begin{equation*}
S(G_{26}^\sigma)= \left[\begin{array}{c;{2pt/2pt}c}
A_{1}& A_{2}\\
\hdashline[2pt/2pt]
-A_{2}^T& A_{3}\\
\end{array}\right].
\end{equation*}
  Because the skew adjacency matrix $S(G_{26}^\sigma)$ is skew symmetric,
so $A_1$ and $A_3$ are also skew symmetric matrices. Let

\begin{equation*} M=
\left[\begin{smallmatrix}
1& 0\\
0& 1\\
\end{smallmatrix}\right],
 M_1=
\left[\begin{smallmatrix}
-1& 0\\
0& 1\\
\end{smallmatrix}\right],
 M_2=
\left[\begin{smallmatrix}
1& 1\\
1& 1\\
\end{smallmatrix}\right],
M_3=
\left[\begin{smallmatrix}
1& 1\\
-1& -1\\
\end{smallmatrix}\right],
M_4=
\left[\begin{smallmatrix}
-1& 1\\
1& -1\\
\end{smallmatrix}\right].
\end{equation*}

\begin{tiny}
\begin{equation*}
A_1= \left[\begin{smallmatrix}
0& M_2\\
-M_2^T& 0& M_3\\
& -M_3^T& 0& \ddots\\
& &  \ddots& \ddots& M_3 \\
& & & -M_3^T& 0& M_3\\
& & & & -M_3^T& 0\\
\end{smallmatrix}\right],
A_2=
\left[\begin{smallmatrix}
 M& & & & -M_3\\
 & M_1\\
 & & M_1\\
 & & & \ddots\\
 M_4& & &  & M_1\\
\end{smallmatrix}\right],
A_3=\left[\begin{smallmatrix}
 0& M_3^T& \\
 -M_3& 0& -M_3^T\\
  & M_3& 0& \ddots\\
  & & \ddots& \ddots& -M_3^T\\
  & & & M_3& 0& -M_3^T\\
  & & & & M_3& 0\\
\end{smallmatrix}\right].
\end{equation*}
\end{tiny}

Then
\begin{equation*}
S(G_{26}^\sigma)^TS(G_{26}^\sigma)=-S(G_{26}^\sigma)^2= -\left[\begin{array}{cc}
 A_1^2-A_2A_2^T& A_1A_2+A_2A_3\\
 -A_2^TA_1-A_3A_2^T& -A_2^TA_2+A_3^2\\
\end{array}\right].
\end{equation*}

\begin{tiny}
\begin{equation*}
\begin{aligned}
&A_1A_2+A_2A_3\\
&=\left[\begin{smallmatrix}
0& M_2M_1\\
-M_2^TM& 0& M_3M_1& & & M_2^TM_3\\
& -M_3^TM_1& 0& \ddots\\
& &  \ddots& \ddots& M_3M_1 \\
M_3M_4& & & -M_3^TM_1& 0& M_3M_1\\
& & & & -M_3^TM_1& 0\\
\end{smallmatrix}\right]+
\left[\begin{smallmatrix}
  0& MM_3^T& & & -M_3^2\\
 -M_1M_3& 0& -M_1M_3^T\\
  & M_1M_3& 0& \ddots\\
  & & \ddots& \ddots& -M_1M_3^T\\
  & & & M_1M_3& 0& -M_1M_3^T\\
  & M_4M_3^T& & & M_1M_3& 0\\
\end{smallmatrix}\right]\\
&=\left[\begin{smallmatrix}
 0& M_2M_1+MM_3^T& & & -M_3^2& 0\\
 -M_2^TM-M_1M_3& 0& M_3M_1-M_1M_3^T& & & M_2^TM_3\\
  & -M_3^TM_1+M_1M_3& 0& \ddots\\
  & & \ddots& \ddots& M_3M_1-M_1M_3^T\\
  M_3M_4& & & -M_3^TM_1+M_1M_3& 0& M_3M_1-M_1M_3^T\\
  0& M_4M_3^T& & & -M_3^TM_1+M_1M_3& 0\\
\end{smallmatrix}\right].
\end{aligned}
\end{equation*}
\end{tiny}
It is easy to certify that $M_2M_1+MM_3^T=0$, $M_2^TM+M_1M_3=0$, $M_3M_1-M_1M_3^T=0$,\\
$-M_3^TM_1+M_1M_3=0$,$M_2^TM_3=0$, $-M_3^2=0$, $M_3M_4=0$, $M_4M_3^T=0$,
so $A_1A_2+A_2A_3=0$.

  Since $A_1$ and $A_3$ are skew symmetric,
$$-A_2^TA_1-A_3A_2^T=A_2^TA_1^T+A_3^TA_2^T=(A_1A_2+A_2A_3)^T=0.$$

\begin{equation*}
A_1^2-A_2A_2^T=
\begin{tiny}
\left[\begin{smallmatrix}
 X_1& 0& X_5\\
 0& X_2& 0& X_6\\
 {X_5}^T& 0& X_3& 0& X_6 \\
 &  {X_6}^T& 0& X_3& 0& X_6\\
 & &  {X_6}^T& 0& X_3& 0& X_6\\
 & & & \ddots& \ddots& \ddots& \ddots& \ddots&\\
 & & & & & & 0& X_6\\
 & & & & & 0& X_3& 0\\
 & & & & & {X_6}^T& 0& X_4
 \end{smallmatrix}\right]
-
 \left[\begin{smallmatrix}
 M^2+M_3M_3^T& & & & MM_4^T-M_3M_1\\
 & M_1^2\\
 & & M_1^2\\
 & & & \ddots\\
 M_4M-M_1M_3^T& & &  & M_4M_4^T+M_1^2\\
\end{smallmatrix}\right],
\end{tiny}
\end{equation*}
where $X_1=-M_2M_2^T$,
$X_2=-M_2^TM_2-M_3M_3^T$,
$X_3=-M_3^TM_3-M_3M_3^T$,
$X_4=-M_3^TM_3$,
$X_5=M_2M_3$,
$X_6={M_3}^2$.

Note that, $X_5=M_2M_3=0$, $X_6={M_3}^2=0$, $X_2=-M_2^TM_2-M_3M_3^T=-4M$,
$X_3=-M_3^TM_3-M_3M_3^T=-4M$,$MM_4^T-M_3M_1=0$, $M_4M-M_1M_3^T=0$, $M_1^2=M$.

\begin{tiny}
\begin{equation*}
\begin{aligned}
&A_1^2-A_2A_2^T=\\
&\left[\begin{smallmatrix}
X_1- M^2-M_3M_3^T\\
 & X_2-M_1^2\\
 & & X_3-M_1^2\\
 & & & \ddots\\
 & & & & X_3-M_1^2\\
 & & & & & X_4-M_4M_4^T-M_1^2
\end{smallmatrix}\right].
\end{aligned}
\end{equation*}
\end{tiny}

  Here, $X_1- M^2-M_3M_3^T=-5M$, $X_4-M_4M_4^T-M_1^2=-5M$,
$X_2-M_1^2=X_3-M_1^2=-5M$, thus $A_1^2-A_2A_2^T=-5I_{\frac{n}{2}}$.

  Similarly, we can prove $A_3^2-A_2^TA_2=-5I_{\frac{n}{2}}$,
so $S(G_{26}^\sigma)^TS(G_{26}^\sigma)=-S(G_{26}^\sigma)^2=5I_n$. From Lemma \ref{le:c-1},
we obtain that $G_{26}^\sigma$ has the optimum orientation.

Each graph of class (\uppercase\expandafter{\romannumeral 14}) has an optimum orientation.

 Since $G_{31}\cong G_{32}$, Let the rows of $S(G_{31}^\sigma)$
correspond successively the vertices
$v_1,v_2,$ $\ldots,v_{24}$.

\begin{equation*}
S(G_{31}^\sigma)=
\begin{footnotesize}
\left[\begin{smallmatrix}
0& 1& 1& 1& 1& 1& 0& 0& 0& 0& 0& 0& 0& 0& 0& 0& 0& 0& 0& 0& 0& 0& 0& 0\\
-1& 0& 0& 0& 0& 0& -1& 1& 1& -1& 0& 0& 0& 0& 0& 0& 0& 0& 0& 0& 0& 0& 0& 0\\
-1& 0& 0& 0& 0& 0& 1& 0& 0& 0& -1& -1& 1& 0& 0& 0& 0& 0& 0& 0& 0& 0& 0& 0\\
-1& 0& 0& 0& 0& 0& 0& -1& 0& 0& 1& 0& 0& 1& -1& 0& 0& 0& 0& 0& 0& 0& 0& 0\\
-1& 0& 0& 0& 0& 0& 0& 0& -1& 0& 0& 1& 0& -1& 0& 1& 0& 0& 0& 0& 0& 0& 0& 0\\
-1& 0& 0& 0& 0& 0& 0& 0& 0& 1& 0& 0& -1& 0& 1& -1& 0& 0& 0& 0& 0& 0& 0& 0\\
0& 1& -1& 0& 0& 0& 0& 0& 0& 0& 0& 0& 0& 0& 0& 0& 1& -1& -1& 0& 0& 0& 0& 0\\
0& -1& 0& 1& 0& 0& 0& 0& 0& 0& 0& 0& 0& 0& 0& 0& 1& 0& 0& -1& -1& 0& 0& 0\\
0& -1& 0& 0& 1& 0& 0& 0& 0& 0& 0& 0& 0& 0& 0& 0& 0& -1& 0& 1& 0& -1& 0& 0\\
0& 1& 0& 0& 0& -1& 0& 0& 0& 0& 0& 0& 0& 0& 0& 0& 0& 0& 1& 0& -1& -1& 0& 0\\
0& 0& 1& -1& 0& 0& 0& 0& 0& 0& 0& 0& 0& 0& 0& 0& 0& -1& 0& -1& 0& 0& 1& 0\\
0& 0& 1& 0& -1& 0& 0& 0& 0& 0& 0& 0& 0& 0& 0& 0& 0& 0& -1& 1& -1& 0& 0& 0\\
0& 0& -1& 0& 0& 1& 0& 0& 0& 0& 0& 0& 0& 0& 0& 0& -1& 0& 0& 0& -1& 0& 1& 0\\
0& 0& 0& -1& 1& 0& 0& 0& 0& 0& 0& 0& 0& 0& 0& 0& 0& 0& 0& 0& -1& 1& -1& 0\\
0& 0& 0& 1& 0& -1& 0& 0& 0& 0& 0& 0& 0& 0& 0& 0& -1& -1& 0& 0& 0& 1& 0& 0\\
0& 0& 0& 0& -1& 1& 0& 0& 0& 0& 0& 0& 0& 0& 0& 0& 0& -1& 1& 0& 0& 0& -1& 0\\
0& 0& 0& 0& 0& 0& -1& -1& 0& 0& 0& 0& 1& 0& 1& 0& 0& 0& 0& 0& 0& 0& 0& -1\\
0& 0& 0& 0& 0& 0& 1& 0& 1& 0& 1& 0& 0& 0& 1& 1& 0& 0& 0& 0& 0& 0& 0& 0\\
0& 0& 0& 0& 0& 0& 1& 0& 0& -1& 0& 1& 0& 0& 0& -1& 0& 0& 0& 0& 0& 0& 0& -1\\
0& 0& 0& 0& 0& 0& 0& 1& -1& 0& 1& -1& 0& 0& 0& 0& 0& 0& 0& 0& 0& 0& 0& -1\\
0& 0& 0& 0& 0& 0& 0& 1& 0& 1& 0& 1& 1& 1& 0& 0& 0& 0& 0& 0& 0& 0& 0& 0\\
0& 0& 0& 0& 0& 0& 0& 0& 1& 1& 0& 0& 0& -1& -1& 0& 0& 0& 0& 0& 0& 0& 0& -1\\
0& 0& 0& 0& 0& 0& 0& 0& 0& 0& -1& 0& -1& 1& 0& 1& 0& 0& 0& 0& 0& 0& 0& -1\\
0& 0& 0& 0& 0& 0& 0& 0& 0& 0& 0& 0& 0& 0& 0& 0& 1& 0& 1& 1& 0& 1& 1& 0
\end{smallmatrix}\right].
\end{footnotesize}
\end{equation*}

    It is easy to certify that, $S(G_{31}^\sigma)^TS(G_{31}^\sigma)=5I_{24}$.


The graphs of classes (\uppercase\expandafter{\romannumeral 1}) and
(\uppercase\expandafter{\romannumeral 2}) have no optimum orientations.

 From Lemma 3.1, we know that for $G_1\cong K_6$, if $S(G_1^\sigma)^TS(G_1^\sigma)=5I_n$,
then $n=0 (mod \ 4)$. While $G_1$ has 6 vertices, so the graph $G_1$ has no optimum orientations.

  We will prove that the class of graphs $G_3$ with order $n\geq 12$ have no optimum
orientations. We prove by contradiction.

  Otherwise,
let \{$(v_2, v_1),(v_2, v_3),(v_2, v_4),(v_2, v_5),(v_2,
v_6)\}\subset E(G_3^\sigma)$. (We give a reversal of $G_3^\sigma$ at $v_1, v_3,
v_4, v_5, v_6$ if necessary.)

  Let $(v_4, v_7)\in E(G_3^\sigma)$. (We give a reversal of $G_3^\sigma$ at $v_7$ if necessary.)
By applying Lemma \ref{le:o-3}, we have $w_{v_2v_7}^{+}(2)=w_{v_2v_7}^{-}(2)$, thus
$(v_7, v_5)\in E(G_3^\sigma)$.

  Let $(v_4, v_8)\in E(G_3^\sigma)$. (We give a reversal of $G_3^\sigma$ at $v_8$ if necessary.)
By applying Lemma \ref{le:o-3}, we have $w_{v_2v_8}^{+}(2)=w_{v_2v_8}^{-}(2)$, thus
$(v_8, v_6)\in E(G_3^\sigma)$.

  Let $(v_6, v_9)\in E(G_3^\sigma)$. (We give a reversal of $G_3^\sigma$ at $v_9$ if necessary.)
By applying Lemma \ref{le:o-3}, we have $w_{v_2v_9}^{+}(2)=w_{v_2v_9}^{-}(2)$, thus
$(v_9, v_5)\in E(G_3^\sigma)$.

  Now we are not sure whether $(v_1, v_4)\in E(G_3^\sigma)$,
so we discuss the following two cases.

 \textit{Case 1.} $(v_1, v_4)\in E(G_3^\sigma)$.

By Lemma \ref{le:o-3}, we have $w_{v_1v_7}^{+}(2)=w_{v_1v_7}^{-}(2)$ and
$w_{v_1v_9}^{+}(2)=w_{v_1v_9}^{-}(2)$, then $(v_1, v_5)$ $\in E(G)$ and
$(v_1, v_6)\in E(G_3^\sigma)$, respectively. Now we find the four paths
$v_1v_3v_2$, $v_1v_4v_2$, $v_1v_5v_2$, $v_1v_6v_2$ of length 2 between
$v_1$ and $v_2$. Since $w_{v_1v_2}^{-}(2)=3$, $w_{v_1v_2}^{+}(2)\neq
w_{v_1v_2}^{-}(2)$. Then this contradicts to Lemma \ref{le:o-3}.

 \textit{Case 2.} $(v_1, v_4)\notin E(G_3^\sigma)$.

Then $(v_4, v_1)\in E(G_3^\sigma)$. By applying Lemma \ref{le:o-3}, we have
$w_{v_1v_7}^{+}(2)=w_{v_1v_7}^{-}(2)$, thus $(v_5, v_1)\in E(G_3^\sigma)$.
Similarly $w_{v_1v_9}^{+}(2)=w_{v_1v_9}^{-}(2)$, thus $(v_6, v_1)\in E(G_3^\sigma)$.
Now we find the four paths $v_1v_3v_2$,\\ $v_1v_4v_2$,$v_1v_5v_2,
v_1v_6v_2$ of length 2 between $v_1$ and $v_2$.
Since $w_{v_1v_2}^{+}(2)=3$, $w_{v_1v_2}^{+}(2)\neq w_{v_1v_2}^{-}(2)$.
Thus this contradicts to Lemma \ref{le:o-3}.

  From Cases 1 and 2, we know whether $(v_1, v_4)\in E(G_3^\sigma)$ or not,
there is a contradiction to Lemma \ref{le:o-3}. Thus the graphs (\uppercase\expandafter{\romannumeral 1}) and
(\uppercase\expandafter{\romannumeral 2}) have no optimum orientations.

The graphs of class (\uppercase\expandafter{\romannumeral 11}) have no optimum orientations.

  For $G_{27}$, we will prove that there is not an optimum orientation by contradiction.

    Otherwise let \{$(v_4, v_1),(v_4, v_8),(v_4, v_{11}),(v_4, v_{14}),(v_4, v_{15})\}\subset
E(G_{27}^\sigma)$ (we give a reversal of $G_{27}^\sigma$ at $v_1, v_8, v_{11}, v_{14}, v_{15}$
if necessary).

   Let $(v_8, v_{16})\in E(G_{27}^\sigma)$ (we give a reversal of $G_{27}^\sigma$ at
$v_{16}$ if necessary). By applying Lemma \ref{le:o-3},
$w_{v_4v_{16}}^{+}(2)=w_{v_4v_{16}}^{-}(2)$, thus $(v_{16}, v_{11})\in
E(G_{27}^\sigma)$. Let $(v_{16}, v_7)\in E(G_{27}^\sigma),$
$(v_{11},v_3)\in E(G_{27}^\sigma)$ (we give a reversal of
$G_{27}^\sigma$ at $v_7$ or $v_3$ if necessary). By applying Lemma \ref{le:o-3},
$w_{v_7v_{11}}^{+}(2)=w_{v_7v_{11}}^{-}(2)$, thus $(v_3, v_7)\in
E(G_{27}^\sigma)$, $w_{v_1v_{11}}^{+}(2)=w_{v_1v_{11}}^{-}(2)$,
thus $(v_3, v_1)\in E(G_{27}^\sigma)$.

   Let $(v_7, v_2)\in E(G_{27}^\sigma)$ (we give a reversal of $G_{27}^\sigma$ at $v_2$
if necessary). By applying Lemma \ref{le:o-3},
$w_{v_1v_{7}}^{+}(2)=w_{v_1v_7}^{-}(2)$, thus $(v_2, v_1)\in E(G_{27}^\sigma)$,
$w_{v_7v_8}^{+}(2)=w_{v_7v_8}^{-}(2)$, thus $(v_8, v_2)\in E(G_{27}^\sigma)$.

   Let $(v_1, v_6)\in E(G_{27}^\sigma)$, $(v_2, v_{10})\in E(G_{27}^\sigma)$
(we give a reversal of $G_{27}^\sigma$ at $v_6$ or $v_{10}$ if necessary). By applying Lemma \ref{le:o-3},
$w_{v_2, v_6}^{+}(2)=w_{v_2, v_6}^{-}(2)$, thus $(v_6, v_{10})\in E(G_{27}^\sigma)$,
$w_{v_4, v_6}^{+}(2)=w_{v_4, v_6}^{-}(2)$, thus $(v_6, v_{15})\in E(G_{27}^\sigma)$,

   Let $(v_6, v_{13})\in E(G_{27}^\sigma)$ (we give a reversal of $G_{27}^\sigma$
at $v_{13}$ if necessary).
By applying Lemma \ref{le:o-3}, $w_{v_3, v_6}^{+}(2)=w_{v_3, v_6}^{-}(2)$,
thus $(v_3, v_{13})\in E(G_{27}^\sigma)$.

   Let $(v_8, v_{12})\in E(G_{27}^\sigma)$ (we give a reversal of $G_{27}^\sigma$
at $v_{12}$ if necessary).
By applying lemma 3.3, $w_{v_4, v_{12}}^{+}(2)=w_{v_4, v_{12}}^{-}(2)$,
thus $(v_{12}, v_{15})\in E(G_{27}^\sigma)$, $w_{v_6, v_{12}}^{+}(2)=w_{v_6, v_{12}}^{-}(2)$,
thus $(v_{10}, v_{12})\in E(G_{27}^\sigma)$.

   Let $(v_{15}, v_9)\in E(G_{27}^\sigma)$ (we give a reversal of $G_{27}^\sigma$
at $v_9$ if necessary).
By applying Lemma \ref{le:o-3}, $w_{v_6, v_9}^{+}(2)=w_{v_6, v_9}^{-}(2)$,
thus $(v_9, v_{13})\in E(G_{27}^\sigma)$. $w_{v_4, v_9}^{+}(2)=w_{v_4, v_9}^{-}(2)$,
thus $(v_9, v_{11})\in E(G_{27}^\sigma)$.

   Let $(v_5, v_{14})\in E(G_{27}^\sigma)$ (we give a reversal of $G_{27}^\sigma$
at $v_5$ if necessary).
By applying Lemma \ref{le:o-3}, $w_{v_1, v_{14}}^{+}(2)=w_{v_1, v_{14}}^{-}(2)$,
thus $(v_1, v_5)\in E(G_{27}^\sigma)$.

   Now if $(v_{13}, v_{14})\in E(G_{27}^\sigma)$, Applying Lemma \ref{le:o-3} to
$w_{v_9, v_{14}}^{+}(2)=w_{v_9, v_{14}}^{-}(2)$, we have $(v_5, v_9)\in E(G_{27}^\sigma)$.
Similarly $w_{v_5, v_{15}}^{+}(2)=w_{v_5, v_{15}}^{-}(2)$
implies $(v_5, v_{12})\in E(G_{27}^\sigma)$,
$(v_{14}, v_7)$ $\in E(G_{27}^\sigma)$ ($w_{v_3, v_{14}}^{+}$
$(2)=w_{v_3, v_{14}}^{-}(2)$).
$(v_{16}, v_5)\in E(G_{27}^\sigma)$ ($w_{v_5, v_{11}}^{+}(2)=w_{v_5, v_{11}}^{-}(2)$).
$(v_2, $ $v_9)\in E(G_{27}^\sigma)$ ($w_{v_1, v_9}^{+}(2)=w_{v_1, v_9}^{-}(2)$).
$(v_{15}, v_7)\in E(G_{27}^\sigma)$ ($w_{v_7, v_9}^{+}(2)=w_{v_7, v_9}^{-}(2)$).
$(v_3, v_{12})\in E(G_{27}^\sigma)$ ($w_{v_3, v_{15}}^{+}(2)=w_{v_3, v_{15}}^{-}(2)$).
$(v_8, v_{13})\in E(G_{27}^\sigma)$ ($w_{v_4, v_{13}}^{+}(2)=w_{v_4, v_{13}}^{-}$ $(2)$).
$(v_{16}, v_6)$ $\in E(G_{27}^\sigma)$ ($w_{v_6, v_8}^{+}(2)=w_{v_6, v_8}^{-}(2)$).
$(v_{10}, v_{11})\in E(G_{27}^\sigma)$ ($w_{v_6, v_{11}}^{+}(2)=w_{v_6, v_{11}}^{-}(2)$).

   Now we obtain the oriented graph, while $v_9v_{11}v_{10}$, $v_9v_2v_{10}$ are the two negative
paths adjoining $v_9$ and $v_{10}$ and $|N(v_9)\cap N(v_{10})|=2$,
this contradicts to Lemma \ref{le:o-3}.

   If $(v_{14}, v_{13})\in E(G_{27}^\sigma)$, By applying Lemma \ref{le:o-3},
$(v_9, v_5)\in E(G_{27}^\sigma)$ ($w_{v_9, v_{14}}^{+}(2)=w_{v_9, v_{14}}^{-}$ $(2)$),
$(v_{12}, v_5)\in E(G_{27}^\sigma)$ ($w_{v_5, v_{15}}^{+}(2)=w_{v_5, v_{15}}^{-}(2)$),
$(v_7, v_{14})\in E(G_{27}^\sigma)$ ($w_{v_3, v_{14}}^{+}(2)$ $=w_{v_3, v_{14}}^{-}(2)$),
$(v_5, v_{16})\in E(G_{27}^\sigma)$ ($w_{v_5, v_{11}}^{+}(2)=w_{v_5, v_{11}}^{-}(2)$),
$(v_9, v_2)\in E(G_{27}^\sigma)$ ($w_{v_1, v_9}^{+}(2)$ $=w_{v_1, v_9}^{-}(2)$).
$(v_7, v_{15})\in E(G_{27}^\sigma)$ ($w_{v_7, v_9}^{+}(2)=w_{v_7, v_9}^{-}(2)$).
$(v_{12}, v_3)\in E(G_{27}^\sigma)$ ($w_{v_3, v_{15}}^{+}(2)$ $=w_{v_3, v_{15}}^{-}(2)$).
$(v_{13}, v_8)\in E(G_{27}^\sigma)$ ($w_{v_4, v_{13}}^{+}(2)=w_{v_4, v_{13}}^{-}(2)$).
$(v_6, v_{16})\in E(G_{27}^\sigma)$ ($w_{v_6, v_8}^{+}$ $(2)$ $=w_{v_6, v_8}^{-}(2)$).
$(v_{11}, v_{10})\in E(G_{27}^\sigma)$ ($w_{v_6, v_{11}}^{+}(2)=w_{v_6, v_{11}}^{-}(2)$).

   Now we obtain the oriented graph, while $v_9v_{11}v_{10}$, $v_9v_2v_{10}$ are the two positive
paths adjoining $v_9$ and $v_{10}$ and $|N(v_9)\cap N(v_{10})|=2$,
this contradicts to Lemma \ref{le:o-3}.

   Hence the graph $G_{27}$ has no optimum orientations.
\end{proof}


\begin{thebibliography}{99}



\bibitem{ABW}
C. Adiga, R. Balakrishnan, W. So,
The skew energy of a digraph,
Linear Algebra Appl. 432 (2010) 1825--1835.


\bibitem{ABC}
A. Anuradha, R. Balakrishnan, X.L. Chen, X.L. Li, H.S. Lian, W. So,
Skew spectra of oriented bipartite graphs,
Electron. J. Combin. 20(4) (2013) \#P18.


\bibitem{CLL}
X.L. Chen, X.L. Li, H.S. Lian,
4-Regular oriented graphs with optimum skew energy,
Linear Algebra Appl. 439 (2013) 2948--2960.


\bibitem{CW}
R. Craigen,
Weighing matrices and conference matrices,
 C.J. Colbourn, J.H. Dinitz (Eds.), The CRC Handbook of
Combinatorial Designs, CRC Press, Boca Raton, Fla (1996) 496--504.


\bibitem{CH}
D.L. Cui, Y.P. Hou,
On the skew spectra of Cartesian products of graphs,
Electron. J. Combin. 20(2) (2013)\#P19.


\bibitem{GX}
S.C. Gong, G.H. Xu,
3-regular oriented graphs with optimum skew energy,
Linear Algebra Appl. 436 (2012) 465--471.



\bibitem{GXZ}
S.C. Gong, G.H. Xu, W.B. Zhong,
4-regular oriented graphs with optimum skew energies,
European. J. Combin. 36(2014)77--85.



\bibitem{IG1}
I. Gutman,
The energy of a graph,
Ber. Math. Statist. Sekt. Forschungsz. Graz.  103 (1978), 1--22.


\bibitem{IG2}
I. Gutman, X.L. Li, J.B. Zhang,
 Graph Energy,
\newblock  in: M. Dehmer, F. Emmert-Streib (Eds.), Analysis of
Complex Network: From Biology to Linguistics, Wiley-VCH Verlag,
Weinheim (2009) 145--174.

\bibitem{LL}
X.L. Li, H.S. Lian,
A survey on the skew energy of oriented graphs,
preprint arXiv:1304.5707v6[math,CO], 2015.

\bibitem{LSI}
X.L. Li, Y.T. Shi, I. Gutman,
Graph Energy,
\newblock  Springer, New York, 2012.



\bibitem{TG}
G.X. Tian,
On the skew energy of orientations of hypercubes,
\newblock  Linear Algebra Appl. 435 (2011) 2140--2149.



\end{thebibliography}
\end{document}